\documentclass{amsart}
\usepackage{amsfonts,amssymb}
\usepackage{enumerate,graphicx}
\usepackage{caption}
\usepackage{subcaption}
\usepackage[square,numbers]{natbib}
\usepackage{mathrsfs,bbm}
\usepackage{tikz,tikzscale,tikz-cd}
\usepackage{multirow,xparse,fp}
\usepackage{environ}
\usepackage{amstext}
\usepackage{hyperref}
\usepackage{diagbox}
\usepackage[pagewise]{lineno}
\usepackage{comment}
\setcounter{MaxMatrixCols}{10}
\newtheorem{theorem}{Theorem}[section]

\newtheorem{proposition}[theorem]{Proposition}

\newtheorem{lemma}[theorem]{Lemma}
\theoremstyle{definition}

\numberwithin{equation}{section}
\allowdisplaybreaks[4]

\begin{document}
	\title{Hausdorff dimensions of affine multiplicative subshifts}
	
	\author[Jung-Chao Ban]{Jung-Chao Ban}
	\address[Jung-Chao Ban]{Department of Mathematical Sciences, National Chengchi University, Taipei 11605, Taiwan, ROC.}
	\address{Math. Division, National Center for Theoretical Science, National Taiwan University, Taipei 10617, Taiwan. ROC.}
	\email{jcban@nccu.edu.tw}
	
	\author[Wen-Guei Hu]{Wen-Guei Hu}
	\address[Wen-Guei Hu]{College of Mathematics, Sichuan University, Chengdu, 610064, P. R. China}
	\email{wghu@scu.edu.cn}
		
	\author[Guan-Yu Lai]{Guan-Yu Lai}
	\address[Guan-Yu Lai]{Department of Mathematical Sciences, National Chengchi University, Taipei 11605, Taiwan, ROC.}
	\email{gylai@nccu.edu.tw}

 \author[Lingmin Liao]{Lingmin Liao}
	\address[Lingmin Liao]{School of Mathematics and Statistics, Wuhan University, Wuhan, Hubei 430072, P. R. China}
	\email{lmliao@whu.edu.cn}

	\keywords{multiplicative subshifts, affine multiplicative subshifts, Hausdorff dimension, Minkowski dimension}
	
	\thanks{}
	
	
	\baselineskip=1.2\baselineskip
	
	\begin{abstract}
		 We calculate the Minkowski and Hausdorff dimensions of affine multiplicative subshifts on $\mathbb{N}$.     
	\end{abstract}
	\maketitle
 \section{Introduction}

Pioneered by Furstenberg with his seminal work on the ergodic proof of the
Szemer\'edi Theorem and subsequent extensions (cf. \cite{ furstenberg1982ergodic,furstenberg1978topological}), the \emph{multiple ergodic theory} is a significant research topic in the field of dynamical systems and ergodic theory. Numerous applications can be found in the fields of number theory, combinatorics, and statistical physics (cf. \cite{bourgain1990double,frantzikinakis2011some,
frantzikinakis2023multiple,furstenberg2011mean, host2005nonconventional,
host2018nilpotent, krause2022pointwise}). In view of fractal analysis, Fan, Liao and Ma \cite{fan2012level} obtained the Minkowski dimension of the set 
\[
X^{(2)}=\left\{(x_i)_{i=1}^\infty\in \{0,1\}^{\mathbb{N}}:x_{i}x_{2i}=0\text{ for all }i\in\mathbb{N}\right\}\text{.}
\]
Their result was later generalized by Kenyon, Peres and Solomyak \cite{kenyon2012hausdorff} who calculated both the Hausdorff and Minkowski dimensions of the so-called \emph{multiplicative shift of finite type }(SFT)  
\begin{equation}\label{1}
X_{A}^{(q)}=\left\{(x_i)_{i=1}^\infty\in\Sigma_m^{\mathbb{N}}:A(x_{i},x_{qi})=1\text{
for all }i\in \mathbb{N}\right\}\text{,}  
\end{equation}
where $q\geq 2$ is an integer, $\Sigma_m=\{0,1,...,m-1\}$ and $A$ is an $m\times m$ binary matrix. Precisely, they proved the following theorem.
\begin{theorem}[Kenyon et al. \cite{kenyon2012hausdorff}, Theorem 1.3]
\label{Thm: 2}Let $q\geq 2$ be an integer and $A$ be an $m\times m$ primitive binary matrix.
\begin{enumerate}
\item The Minkowski dimension of $X_{A}^{(q)}$ exists and equals 
\begin{equation}\label{5}
\dim _{M}X_{A}^{(q)}=(q-1)^{2}\sum_{i=1}^{\infty }\frac{\log _{m}\left\vert
A^{i-1}\right\vert }{q^{i+1}}\text{,}  
\end{equation}
where $\left\vert A\right\vert $ stands for the sum of all entries of the
matrix $A$.
\item The Hausdorff dimension of $X_{A}^{(q)}$ is given by 
\begin{equation}\label{6}
\dim _{H} X_{A}^{(q)}=\frac{q-1}{q}\log _{m}\sum_{i=0}^{m-1}t_{i}\text{,}
\end{equation}
where $(t_{i})_{i=0}^{m-1}$ is the unique positive vector satisfying $%
t_{i}^{q}=\sum_{j=0}^{m-1}A(i,j)t_{j}$.
\end{enumerate}
\end{theorem}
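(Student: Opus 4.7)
The decisive structural observation is that every $n\in\mathbb{N}$ has a unique factorisation $n=kq^{j}$ with $q\nmid k$ and $j\geq 0$, giving $\mathbb{N}=\bigsqcup_{q\nmid k}I_{k}$ where $I_{k}=\{k,kq,kq^{2},\ldots\}$. The constraint $A(x_{i},x_{qi})=1$ then couples only consecutive coordinates within a single chain $I_{k}$, so the map $x\mapsto((x_{kq^{j}})_{j\geq 0})_{q\nmid k}$ identifies $X_{A}^{(q)}$ (as a set) with the countable product $\prod_{q\nmid k}X_{A}$ of copies of the ordinary one-sided SFT $X_{A}$. Cylinder counts and measures on $X_{A}^{(q)}$ therefore factor over the chains, while the metric $d(x,y)=m^{-\min\{i:\,x_{i}\neq y_{i}\}}$ couples them.

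For the Minkowski dimension, an admissible length-$N$ prefix $[x_{1},\ldots,x_{N}]$ is specified by choosing, for each $k\leq N$ with $q\nmid k$, an $A$-admissible word of length $L_{k}(N):=\lfloor\log_{q}(N/k)\rfloor+1$. Primitivity of $A$ yields $|A^{L-1}|=\mathbf{1}^{T}A^{L-1}\mathbf{1}$ such words, so the total cylinder count is $\prod_{k\leq N,\,q\nmid k}|A^{L_{k}(N)-1}|$. Grouping by chain length, $\#\{k\leq N:q\nmid k,\,L_{k}(N)=i\}\sim(q-1)^{2}N/q^{i+1}$, and dividing $\log_{m}$ of the product by $N$ yields (\ref{5}); the series converges because primitivity forces $\log_{m}|A^{i-1}|=O(i)$ against the geometric factor $q^{-i-1}$.

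For the Hausdorff lower bound I would apply the mass distribution principle to a Gibbs product measure. Writing $T:=\sum_{a}t_{a}$, equip each chain $I_{k}$ with the Markov measure $\mu_{k}$ of initial distribution $\pi_{a}=t_{a}/T$ and transition $P(a,b)=A(a,b)t_{b}/t_{a}^{q}$; the defining equation $t_{a}^{q}=\sum_{b}A(a,b)t_{b}$ is precisely what makes $P$ row-stochastic. Exploiting the identity $-\log P(a,b)=q\log t_{a}-\log t_{b}$, a direct rearrangement yields
\begin{equation*}
-\log\mu_{k}([y_{0},\ldots,y_{L-1}])=\log T+(q-1)\sum_{j=0}^{L-1}\log t_{y_{j}}-q\log t_{y_{L-1}}.
\end{equation*}
Summing over all chain prefixes inside $\{1,\ldots,N\}$ and taking expectations, the $\log t_{x_{i}}$ contributions coming from the middle and last terms cancel exactly at every chain-depth $\ell$: the coefficient $(q-1)$ times the chain-position count $(q-1)N/q^{\ell+1}$ matches $q$ times the last-position count $(q-1)^{2}N/q^{\ell+2}$. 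Consequently $-\log\mu([x_{1},\ldots,x_{N}])/N\to\tfrac{q-1}{q}\log T$ in expectation, and a strong law of large numbers within each chain-length class (the chains being independent under $\mu$) promotes this to $\mu$-almost-sure convergence, whence $\dim_{H}X_{A}^{(q)}\geq\tfrac{q-1}{q}\log_{m}T$.

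The matching Hausdorff upper bound is the main obstacle, since (\ref{5}) strictly exceeds (\ref{6}) in nontrivial examples (e.g.\ the Fibonacci matrix with $q=2$) and so uniform cylinder covers cannot work. My plan is to use Moran-type covers at chain-dependent truncation depths $(\ell_{k})$: cutting each chain $I_{k}$ at depth $\ell_{k}$ produces a product cylinder of diameter $m^{-M}$ with $M=\min_{k}kq^{\ell_{k}}$, and optimising $\sum(\mathrm{diam})^{s}$ over admissible families $(\ell_{k})$ reduces to a variational problem whose Legendre dual, matched against the Gibbs weights above, selects $s=\tfrac{q-1}{q}\log_{m}T$. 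The technical crux is to verify that the Gibbs product measure is dimension-maximising over all Borel probabilities on $X_{A}^{(q)}$; I would approach this by a pressure argument on the chain-indexed system, arguing that cross-chain correlations cannot improve the local dimension because the metric depends only on the smallest index of disagreement, which asymptotically decouples the chains.
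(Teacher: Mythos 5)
First, a remark on the comparison itself: the paper does not prove Theorem \ref{Thm: 2} at all --- it is quoted from Kenyon--Peres--Solomyak and used as a black box (even the paper's own generalization, Theorem \ref{Thm: 3}(3) and Proposition \ref{lemma 3.1}(3), is dispatched with ``the proof is similar, we omit it''). So the only meaningful benchmark is the method the paper uses for its Theorem \ref{Thm: 3}(1)--(2): Billingsley's lemma (Lemma \ref{lemma B}) applied twice, with one and the same product measure, for both the lower and the upper bound. Against that benchmark, your Minkowski computation and your Hausdorff \emph{lower} bound are correct and essentially the standard route: the chain decomposition $\mathbb{N}=\bigsqcup_{q\nmid k}\{k,kq,kq^2,\dots\}$, the count $\prod_k|A^{L_k(N)-1}|$ with chain-length densities $(q-1)^2/q^{i+1}$, and the Markov measure $P(a,b)=A(a,b)t_b/t_a^{q}$ whose row-stochasticity encodes $t_a^q=\sum_bA(a,b)t_b$. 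Your telescoping identity and the depth-by-depth cancellation $(q-1)\cdot\frac{(q-1)N}{q^{\ell+1}}=q\cdot\frac{(q-1)^2N}{q^{\ell+2}}$ are exactly right; you should still spell out the strong law for independent but non-identically distributed chain contributions (group the chains by truncated length), but that is routine.

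The genuine gap is the upper bound, where what you offer is a plan rather than a proof: the Moran-cover/Legendre-dual scheme hinges on the assertion that the Gibbs product measure is ``dimension-maximising over all Borel probabilities,'' which is precisely the statement that needs proving, and the proposed ``pressure argument'' for decoupling the chains is not carried out. The detour is also unnecessary. Lemma \ref{lemma B}(2) only requires $\liminf_N\frac{-\log_m\mu([x_1\cdots x_N])}{N}\le c$ for \emph{every} $x$, with the \emph{same} measure $\mu$ you already built. The obstacle you correctly sense is that your identity involves empirical symbol frequencies of $x$, which for a fixed $x$ need not match their $\mu$-expectations; but for this particular measure they cancel anyway. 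Indeed, a position $i\le N$ is the last element of its chain inside $[1,N]$ exactly when $qi>N$, so summing your per-chain identity gives
\begin{equation*}
-\log_m\mu([x_1\cdots x_N])=C_N\log_m T-S_N+q\,S_{\lfloor N/q\rfloor},\qquad S_N:=\sum_{i\le N}\log_m t_{x_i},
\end{equation*}
with $C_N$ the number of chains meeting $[1,N]$. Since $(S_N/N)_N$ is bounded, a pigeonhole along the orbit $N\mapsto\lfloor N/q\rfloor$ yields infinitely many $N$ with $S_{\lfloor N/q\rfloor}/\lfloor N/q\rfloor\le S_N/N+\varepsilon$, whence the liminf is at most $\frac{q-1}{q}\log_m T$ for every $x$. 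This is the analogue of the frequency-consistency step $R'_{v}(x)=R_{v}(x)$ in the paper's proof of Theorem \ref{Thm: 3}(1)--(2). Without this (or the original tree argument of Kenyon et al.), your write-up establishes only ``$\ge$'' in (\ref{6}).
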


If the configuration on the positions $\left( iq^{\ell}\right) _{\ell=0}^{\infty }$
for all $i\in \mathbb{N}$, is specified by some subshift $\Omega $ of $\mathbb{N}$, that is, 
\[
X_{\Omega }^{(q)}=\left\{(x)_{i=1}^\infty\in \Sigma_m^{\mathbb{N}}:\left(x_{iq^{\ell}}\right)_{\ell=0}^{\infty }\in \Omega \text{ for all }i\in\mathbb{N}\right\},
\]
then we call it \emph{multiplicative subshift}. The name `multiplicative' comes from the fact that $X_{\Omega }^{(q)}$ is invariant under the action of
multiplicative integers. That is, if $(x_{i})_{i=1}^{\infty }\in X_{\Omega
}^{(q)}$, then $(x_{ki})_{i=1}^{\infty }\in X_{\Omega }^{(q)}$ for all $k\in 
\mathbb{N}$. Numerous investigations have been conducted regarding the complicity of the multiplicative subshifts since then. For example, Peres, Schmeling, Seuret and Solomyak \cite{peres2014dimensions} obtained
the Hausdorff and Minkowski dimensions of the \emph{3-multiple multiplicative subshifts}:
\begin{equation}\label{2}
X_{\Omega }^{(2,3)}=\left\{(x_i)_{i=1}^\infty\in \Sigma_m^{\mathbb{N}}:x_{i}x_{2i}x_{3i}=0\text{ for all }i\in \mathbb{N}\right\}\text{.}  
\end{equation}
We note that the set $X_{\Omega }^{(2,3)}$ can be written in the following generalized form:
\[
X_{\Omega }^{(S)}=\left\{(x_i)_{i=1}^\infty\in \Sigma_m^{\mathbb{N}}:x|_{iS}\in \Omega \mbox{ for all }i\in \mathbb{N} \mbox{ with }\gcd (i,S)=1\right\},
\]
where $\gcd (i,S)=1$ means that $\gcd (i,s)=1$, $\forall s\in S$, and $x|_{iS}:=(x_{i\cdot s})_{s\in S}$. Ban, Hu and Lin \cite{ban2019pattern} derived the entropy formula (or the Minkowski dimension) for the general $r$-multiple multiplicative subshifts for $r\in \mathbb{N}$. Fan \cite{fan2021multifractal}, Schmeling and Wu \cite{fan2016multifractal} studied the multifractal analysis of general multiple ergodic averages. Brunet \cite{brunet2021dimensions}, and Ban, Hu and Lai \cite{ban2021entropy,ban2023hausdorff} investigated the Hausdorff and Minkowski dimensions of the multidimensional multiplicative subshifts.

The present paper aims to investigate the \emph{affine multiplicative shift of finite type (SFT)}. Precisely, let $A$ be an $m\times m$ binary matrix, $1\leq p<q\in \mathbb{N}$ and $a,b\in 
\mathbb{Z}$. Define an affine multiplicative shift by
\begin{equation}
X_{A}^{(p,q;a,b)}=\left\{(x_i)_{i=1}^\infty\in \Sigma_m^{\mathbb{N}}:\text{ }%
A\left(x_{pk+a},x_{qk+b}\right)=1\text{ for all }k\geq 1\right\}\text{.}  \label{3}
\end{equation}
It is easily seen that $X_{A}^{(q)}$ in (\ref{1}) is a special case of (\ref{3}) by taking $(p,q;a,b)=(1,q;0,0)$. 

We calculate the Hausdorff and Minkowski dimensions of the set $X_{A}^{(p,q;a,b)}$.

\begin{theorem}[Minkowski dimension]
\label{Thm: 1}Let $A$ be an $m\times m$ irreducible binary matrix, $1\leq
p<q $ and $a,b\in \mathbb{Z}$.

\begin{enumerate}
\item If $(p,q)\nmid (b-a)$, then 
\[
\dim _{M}X_{A}^{(p,q;a,b)}=1-\frac{2}{q}+\frac{1}{q}\log _{m}\left\vert
A\right\vert .
\]
\item If $(p,q)\mid (b-a)$, then 
\[
\dim _{M}X_{A}^{(p,q;a,b)}=1-\frac{2q_{1}-1}{(p,q)q_{1}^{2}}+\frac{%
(q_{1}-1)^{2}}{(p,q)}\sum_{i=2}^{\infty }\frac{\log _{m}\left\vert
A^{i-1}\right\vert }{q_{1}^{i+1}}\text{,} 
\]
where $q_{1}=\frac{q}{(p,q)}$. In particular, if $(p,q)=1$, we have 
\begin{equation}
\dim _{M}X_{A}^{(p,q;a,b)}=(q-1)^{2}\sum_{i=1}^{\infty }\frac{\log
_{m}\left\vert A^{i-1}\right\vert }{q^{i+1}}\text{.}  \label{7}
\end{equation}
\end{enumerate}
\end{theorem}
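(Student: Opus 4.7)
The plan is to encode the constraints defining $X_A^{(p,q;a,b)}$ as a graph on $\mathbb{N}$ with edges $\{pk+a,\,qk+b\}$ for $k\ge 1$. Each vertex has at most one up-neighbor and one down-neighbor (via $n=pk+a\mapsto qk+b$ and $n=qk+b\mapsto pk+a$), so the connected components are paths (``chains''). On a chain of length $L$ the number of admissible colorings is $|A^{L-1}|$, so
\[
\log_m \#\bigl. X_A^{(p,q;a,b)}\bigr|_{[1,N]} \;=\; \sum_{\text{chains } C} \log_m \bigl|A^{j_C(N)-1}\bigr|,\qquad j_C(N):=|C\cap[1,N]|,
\]
and the Minkowski dimension is the linear-in-$N$ coefficient of the right-hand side.

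Part (1) is immediate: modulo $d:=(p,q)$ we have $pk+a\equiv a$ and $qk+b\equiv b$, so the hypothesis $(p,q)\nmid b-a$ forces these to differ, and no vertex can lie on two edges. The graph is thus a disjoint union of isolated edges and isolated vertices, with $N/q+O(1)$ edges inside $[1,N]$; summing gives the stated formula.

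For part (2) I would first reduce to the coprime case. Write $p=dp_1$, $q=dq_1$, $c=(b-a)/d$, so $(p_1,q_1)=1$. Positions in residue classes $\neq a\bmod d$ carry no constraints and contribute freely $m^{N(d-1)/d+O(1)}$. On the class $a\bmod d$, the substitution $n=a+dj$, $y_j:=x_{a+dj}$ rewrites the constraint as $A(y_{p_1 k},y_{q_1 k+c})=1$, which is exactly $X_A^{(p_1,q_1;0,c)}$ on a lattice of density $1/d$. Granting formula~(\ref{7}) for the reduced shift gives
\[
\dim_M X_A^{(p,q;a,b)}=\frac{d-1}{d}+\frac{(q_1-1)^2}{d}\sum_{i=1}^{\infty}\frac{\log_m |A^{i-1}|}{q_1^{i+1}},
\]
and the stated form follows by peeling off the $i=1$ term (using $|A^0|=m$) and invoking the algebraic identity $\frac{d-1}{d}+\frac{(q_1-1)^2}{d q_1^2}=1-\frac{2q_1-1}{d q_1^2}$.

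The main work is formula~(\ref{7}) when $(p,q)=1$. Set $\varphi(n)=q(n-a)/p+b$; a chain from a root $n_0\notin b+q\mathbb{Z}_{\ge 1}$ is the orbit $n_0,\varphi(n_0),\varphi^2(n_0),\ldots$, and, writing $k_i=(n_i-a)/p$, the iteration continues past $n_i$ iff $k_i\equiv\alpha\pmod p$ where $\alpha\equiv q^{-1}(a-b)\pmod p$. For each $j\ge 1$ I will count the chains with exactly $j$ elements in $[1,N]$, splitting into (a) full chains of length $j$ (terminated internally by $k_{j-2}\not\equiv\alpha\pmod p$) and (b) partial chains of length $\ge j+1$ cut off at the right end by $n_{j-1}\le N<n_j$. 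Using $n_i\sim(q/p)^i n_0$ to control the ranges of $n_0$ and the independence of mod-$p^j$ and mod-$q$ conditions furnished by CRT (since $(p,q)=1$), a density calculation yields
\[
\#(a)\sim \frac{(p-1)(q-1)}{p\,q^{j}}\,N,\qquad \#(b)\sim \frac{(q-p)(q-1)}{p\,q^{j+1}}\,N,
\]
whose sum collapses to the $p$-independent value $(q-1)^2 N/q^{j+1}$; multiplying by $\log_m|A^{j-1}|$ and summing over $j\lesssim\log N$ (with error $O((\log N)^2)$) gives~(\ref{7}). The main obstacle is exactly this partial-chain bookkeeping: the $p$-independence only emerges after the cancellation between (a) and (b), and the degenerate case $p=1$ (no internal termination, so only (b) contributes) must be absorbed into the same estimate.
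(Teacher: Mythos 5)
Your proposal is correct and takes essentially the same route as the paper: both decompose $\mathbb{N}$ into the chains generated by $n\mapsto q(n-a)/p+b$, count the chains meeting $[1,N]$ in exactly $j$ points, and pair the resulting densities with $\log_m|A^{j-1}|$; your split into full chains (a) and right-truncated chains (b) is just the un-telescoped form of the paper's $D_\ell(n)=(\#\{\ge\ell\})-(\#\{\ge\ell+1\})$ in (\ref{eq 2.1-5}), and your post-cancellation density $(q-1)^2/q^{j+1}$ matches (\ref{eq 2.1-7})--(\ref{eq 2.1-8}). The one organizational difference is that you handle $(p,q)=d>1$ by first passing to the coprime shift $X_A^{(p_1,q_1;0,c)}$ on the residue class $a\bmod d$ and adding the free density $(d-1)/d$, whereas the paper carries $p_1,q_1,c$ through the chain bookkeeping directly via the accumulated residues $a_\ell,b_\ell$; your reduction is a bit cleaner and yields the same formula after peeling off the $i=1$ term of (\ref{7}).
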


We remark that if $(p,q,a,b)=(1,q,0,0)$, then by (\ref{7}), we obtain (\ref{5}), i.e., the result of Kenyon et al. on the Minkowski dimension of $X_{A}^{(q)}$.

\begin{theorem}[Hausdorff dimension]\label{Thm: 3}
Let $A=[a_{i,j}]_{i,j=0}^{m-1}$ be an $m\times m$ irreducible binary matrix.
The following assertions hold true.

\begin{enumerate}
\item If $1\leq p<q$, $a$, $b\in \mathbb{Z}$ and $(p,q)\nmid (b-a)$, then 
\[
\dim _{H}X_{A}^{(p,q;a,b)}=1-\frac{1}{p}-\frac{1}{q}+\frac{1}{p}\log
_{m}\sum_{i=0}^{m-1}a_{i}{}^{\frac{p}{q}}\text{,} 
\]
where $a_i=\sum_{j=0}^{m-1} a_{i,j}$.
\item If $1<p<q$, $a$, $b\in \mathbb{Z}$ and $(p,q)\mid (b-a)$, then 
\[
\dim _{H}X_{A}^{(p,q;a,b)}=1-\frac{1}{p}-\frac{1}{q}+\frac{1}{(p,q)p_{1}q_{1}%
}+\sum_{i=2}^{\infty }\left( \sum_{j=1}^{i}P_{ij}\right) \log
_{m}t_{\phi ;i}\text{,} 
\]
where $p_1(p,q)=p$, $P_{i,j}$ and $t_{\phi;i}$ are defined in (\ref{eq 3.1-1}) and (\ref{eq 3.1-2}).
\item If $1=p<q$, $a,b\in \mathbb{Z}$ and $A$ is primitive, then 
\begin{equation}
\dim _{H}X_{A}^{(p,q;a,b)}=\frac{q-1}{q}\log _{m}\sum_{i=0}^{m-1}t_{i},\label{8}
\end{equation}
where $(t_{i})_{i=0}^{m-1}$ is the unique positive vector satisfying $%
t_{i}^{q}=\sum_{j=0}^{m-1}a_{i,j}t_{j}$.

\item We have $\dim _{H}X_{A}^{(p,q;a,b)}\leq \dim _{M}X_{A}^{(p,q;a,b)}$ and the equality holds only when the row sums $a_i$ of $A$ are all equal.
\end{enumerate}
\end{theorem}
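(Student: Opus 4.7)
The plan is to handle the four parts in sequence, leveraging a uniform structural principle. Define the partial map $T(n)=\frac{q(n-a)}{p}+b$, which lies in $\mathbb{N}$ precisely when $n\equiv a\pmod p$; the defining constraints of $X_A^{(p,q;a,b)}$ then read $A(x_n,x_{T(n)})=1$, and the orbits of $T$ partition $\mathbb{N}$ into chains. A direct congruence check shows that $T$ can be iterated once more starting from $T(n)$ if and only if $qk\equiv a-b\pmod p$ is solvable, i.e., exactly when $(p,q)\mid (b-a)$. This dichotomy explains the split between parts (1) and (2).

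For part (1), every $T$-chain has length at most two, so $X_A^{(p,q;a,b)}$ is, modulo the free coordinates, a Bernoulli-like product over the pairs $(pk+a,qk+b)$. I would obtain the lower bound by constructing a measure $\mu$ as the product, over these pairs, of a joint distribution $\nu_{ij}$ supported on the edges $\{(i,j):A(i,j)=1\}$, tensored with the uniform measure on the unpaired coordinates. The set $[1,N]$ splits into paired indices of density $2/q$, half-paired source indices $pk+a\le N<qk+b$ of density $1/p-1/q$, and free indices of density $1-1/p-1/q$, yielding a local dimension
\[
\tfrac{1}{q}H_m(\nu)+\bigl(\tfrac{1}{p}-\tfrac{1}{q}\bigr)H_m(\nu^{\mathrm{source}})+\bigl(1-\tfrac{1}{p}-\tfrac{1}{q}\bigr).
\]
A Lagrange multiplier optimisation gives $\nu_i^{\mathrm{source}}\propto a_i^{p/q}$ with conditionals uniform on $\{j:A(i,j)=1\}$, realising the supremum $(1/p)\log_m\sum_i a_i^{p/q}$; the mass distribution principle converts this into the Hausdorff lower bound. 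The matching upper bound comes from a sharpened cylinder cover that accounts, for each length-$N$ cylinder, for the constraint imposed by half-paired sources on coordinates in $(N,qN/p]$, together with a convex-duality argument showing that no feasible distribution outperforms the above.

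For parts (2) and (3), the $T$-chains are infinite and naturally organise into trees: starting from a primitive root $n\notin T(\mathbb{N})$, the backward iterates of $T$ form a tree whose branching at level $i$ is controlled by $A^{i-1}$. I would adapt the Kenyon--Peres--Solomyak strategy, defining weights $t_{\phi;i}$ at each level as the unique positive solution of the fixed-point system referenced in (\ref{eq 3.1-2}) and assigning the associated product measure level by level. Summing the local dimension contributions across levels reproduces the series in the statement of part (2). Part (3) is the degenerate case $p=1$, where each tree collapses to a linear $q$-orbit $\{k,qk,q^2k,\ldots\}$ over $k$ coprime to $q$, the fixed-point system reduces to $t_i^q=\sum_j A(i,j)t_j$, and the closed form (\ref{8}) drops out.

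Finally, part (4) follows from Jensen's inequality for the strictly concave map $x\mapsto x^{p/q}$ (since $p<q$): in case (1), $\sum_i a_i^{p/q}\le m^{1-p/q}|A|^{p/q}$ with equality iff all $a_i$ coincide, and substitution shows the Hausdorff and Minkowski formulas agree precisely under this hypothesis; the same concavity applied termwise through the series in case (2) handles that branch, while the comparison $\dim_H\le\dim_M$ is of course general. The main technical obstacle I anticipate lies in part (2): establishing convergence and uniqueness of the variational optimiser over the infinite tree, and then matching it via a delicate tree-pruning covering argument in the upper bound rather than settling for an approximation that vanishes only slowly in $N$.
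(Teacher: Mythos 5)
Your treatment of parts (1), (3) and (4) is essentially sound and close to the paper's: the dichotomy via solvability of $qk\equiv a-b\pmod p$ is exactly the right splitting criterion, the density bookkeeping ($2/q$ paired, $1/p-1/q$ half-paired, $1-1/p-1/q$ free) and the optimiser $\nu^{\mathrm{source}}_i\propto a_i^{p/q}$ match the paper's measure $\mathbb{P}_{\mu_1,\mu_2}$, and the Jensen/H\"older step in (4) is the same computation. One remark on the upper bound in (1): the paper does not use a separate covering or convex-duality argument; it applies Billingsley's lemma to the \emph{same} measure, and the point that makes this work for \emph{every} $x$ (not just a.e.\ $x$) is the frequency-consistency identity $R_{v_1}(x)=\sum_{v_2}R_{v_1v_2}(x)$, which forces the fluctuating part of $-\frac1n\log_m\mathbb{P}([x_1\cdots x_n])$ to cancel identically. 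Your sketched alternative could be made to work but is the harder route.

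The genuine gap is in part (2). You assert that when $(p,q)\mid(b-a)$ the $T$-chains are infinite and then organise the argument around a single fixed-point system on an infinite tree. This is structurally wrong for $1<p<q$: the forward map $T(n)=q(n-a)/p+b$ is only defined when $p\mid(n-a)$, and at each step the next iterate satisfies this congruence with ``probability'' $1/p_1$ (where $p_1=p/(p,q)$), so the orbit partition of $\mathbb{N}$ contains chains of \emph{every finite length} $\ell$, with density proportional to $q_1^{-\ell}$ in $\ell$ (this is precisely how the paper's Minkowski computation in Theorem 1(2) proceeds, and it is the difficulty the introduction flags). Consequently the Hausdorff dimension requires a \emph{separate} finite optimisation of $\sum_{j=1}^{i}P_{i,j}H^{\mu_i}(\alpha_j)$ for each chain length $i$, where $P_{i,j}$ is the joint density of length-$i$ chains meeting $[1,n]$ in exactly $j$ points; the resulting $t_{\phi;i}$ in (\ref{eq 3.1-2}) is an explicit nested expression depending on $i$, not the solution of a stationary fixed-point equation. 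If you carry out your infinite-chain argument as described, you recover the Kenyon--Peres--Solomyak formula of part (3) --- but the paper explicitly notes that cases (2) and (3) have the same Minkowski dimension yet \emph{different} Hausdorff dimensions, so your route would give the wrong answer in case (2). The same misidentification propagates into your part (4) sketch for that branch, where the paper must instead prove, by iterated H\"older, the level-wise bound $\log_m t_{\phi;i}\le\bigl(\sum_{k}P_{i,k}\log_m|A^{k-1}|\bigr)/\bigl(\sum_{j}P_{i,j}\bigr)$ and then resum the double series.
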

We remark that the condition $1\leq p$ in (1) of Theorem \ref{Thm: 3} could be replaced by $1<p$ since if $p=1$ then $(p,q)=1\mid (b-a) $ for all $a,b\in\mathbb{Z}$. Note that in cases (2) and (3) of Theorem \ref{Thm: 3}, we have the same Minkowski dimensions, but the Hausdorff dimensions are different.

The Hausdorff dimension result for $X_{A}^{(q)}$ (Theorem \ref{Thm: 2} (2)) is extended to affine multiplicative SFT $X_{A}^{(p,q;a,b)}$ when comparing formula (\ref{6}) with (\ref{8}). However, we point out that studying the dimensions of $X_{A}^{(p,q;a,b)}$ is quite different to that of $X_{A}^{(q)}$ for two reasons.

\textbf{1}. The decomposition of the lattice $\mathbb{N}$ into the the correlated sublattices according to the `affine multiplicative constraint' $(pk+a,qk+b)$ is totally different from the case of the `multiplicative constraint' $\left(k, qk\right) $. For example, for the case where $(p,q;a,b)=(1,q;0,0)$ (cf. \cite{ban2021entropy,fan2014some, kenyon2012hausdorff}), the lattice $\mathbb{N}$ can be decomposed according to the correlated sublattices as \begin{equation}
\mathbb{N}=\bigsqcup\limits_{q\nmid i}\{i,iq,iq^{2},\ldots \}\text{,}
\label{4}
\end{equation}
where each $\{i,iq,iq^{2},\ldots \}$ is an infinite string of lattices in $\mathbb{N}$. However, obtaining a closed form for the decomposition of $ \mathbb{N}$ as shown in (\ref{4}) is extremely challenging for general $(p,q;a,b)$. This is because, for $n=qk+b\in\mathbb{N}$, $n$ may not necessarily be of the form $pk'+a$. Thus, unlike the case $(1,q;0,0)$, the decomposition of $\mathbb{N}$ for the general $(p,q;a,b)$ may contain arbitrary long strings of lattices in $\mathbb{N}$.  

\textbf{2}. Consider the issue we mentioned in the preceding paragraph. The classical optimization method in \cite{kenyon2012hausdorff} for finding the Hausdorff dimension is not valid for general $(p,q;a,b)$.

We offer some new ideas to overcome the aforementioned difficulties and derive the Hausdorff and Minkowski dimensions for the affine multiplicative SFTs. Section \ref{sec2} yields the Minkowski dimension and Section \ref{sec3} gives the Hausdorff dimension for $X_{A}^{(p,q;a,b)}$. In Section \ref{sec4}, we discuss the
dimension theory for higher-order multiplicative constraints.

\section{Proof of Theorem \ref{Thm: 1}}\label{sec2}
In this section, we provide the proof of Theorem \ref{Thm: 1}.  
\begin{proof}[Proof of Theorem \ref{Thm: 1}]
	\item[\bf (1)] For $(p,q)\nmid (b-a)$, we observe that the equation
	\[
	qk+b=pk'+a
	\]
	has no integer pair $(k,k')$ as a solution. This observation gives that the constraints $(pk+a,qk+b)$ only induce two types of disjoint lattices with cardinalities 1 and 2. 
 
 Now, we compute the number of each type of lattices appearing in a finite interval. For $n\geq 1$, let $\mathcal{A}(p,q,a,b,n)$ be the collection of disjoint lattices with cardinality 2 intersecting $\{1,...,n\}$ with respect to the constraints $(pk+a,qk+b)$. Then,
\begin{align*}
    \#\mathcal{A}(p,q,a,b,n)=&\#\left\{k\geq 1: 1\leq pk+a,qk+b\leq n\right\}\\
    =&\#\left\{ k\geq 1: \frac{1-b}{q}\leq k \leq \frac{n-b}{q}\mbox{ and }\frac{1-a}{p}\leq k \leq \frac{n-a}{p}\right\}.
\end{align*}
	Since $1\leq p<q$ and $a,b$ are fixed, we have 
 \begin{equation*}
    \frac{n-b}{q}\leq \frac{n-a}{p}\mbox{ as }n\gg 1.
 \end{equation*}
Thus, if $n\gg 1$,
\begin{align*}
    \#\mathcal{A}(p,q,a,b,n)=\#\left\{ k: \max\left\{1,\frac{1-a}{p},\frac{1-b}{q}\right\}\leq k \leq \frac{n-b}{q}\right\},
\end{align*}
and
	\begin{equation}\label{eq 2.1-1}
	\lim_{n\to \infty}\frac{\# \mathcal{A}(p,q,a,b,n)}{n}=\frac{1}{q}.
	\end{equation}
	
	Observe that the members in 
 \begin{equation*}
     \left\{1,...,n\right\}\setminus \mathcal{A}(p,q,a,b,n)
 \end{equation*}
 are disjoint lattices with cardinality 1 intersecting $\{1,...,n\}$ with respect to the constraints $(pk+a,qk+b)$. Note that 
	\begin{equation}\label{eq 2.1-2}
	  \lim_{n\to\infty} \frac{n-2	\# \mathcal{A}(p,q,a,b,n)}{n}=1-\frac{2}{q}.  
	\end{equation}
Then, 
\begin{equation}\label{eq 2.1-3}
   \left|\mathcal{P}\left(X^{p,q;a,b}_{A},\{1,...,n\}\right)\right|=m^{n-2	\# \mathcal{A}(p,q,a,b,n)}\left|A\right|^{\# \mathcal{A}(p,q,a,b,n)}, 
\end{equation}
where $\mathcal{P}(X,I):=\left\{x|_I: x\in X \right\}$.

Therefore, by (\ref{eq 2.1-1}), (\ref{eq 2.1-2}) and (\ref{eq 2.1-3}), 
\begin{align*}
\dim_M X^{p,q;a,b}_{A}&=\lim_{n\to\infty}\frac{\log_m\left|\mathcal{P}\left(X^{p,q;a,b}_{A},\{1,...,n\}\right)\right|}{n}\\
    &=1-\frac{2}{q} +\frac{1}{q}\log_m |A|.
\end{align*}
	
\item[\bf (2)] For $(p,q)\mid (b-a)$, the equation
	\[
	qk+b=pk'+a
	\]
	has an integer pair $(k,k')$ as a solution. The following observation gives that the constraints $(pk+a,qk+b)$ induce to many types of disjoint lattices, i.e. the lattice of cardinality $\ell$ for all $\ell\in\mathbb{N}$. 
 
 For $n\geq 1$, we observe that the disjoint lattices with cardinality 2 intersecting $\{1,...,n\}$ with respect to the constraints $(pk+a,qk+b)$ are of the form $\left\{A_1,A_2\right\}$, where
 \begin{equation*}
     A_1=pk+a \mbox{ and }A_2=qk+b.
 \end{equation*}
Let $p_1=\frac{p}{(p,q)}$, $q_1=\frac{q}{(p,q)}$ and $c=\frac{b-a}{(p,q)}$. Since $(p,q)\mid (b-a)$ and $(p_1,q_1)=1$, there exists a unique $0\leq r_1\leq p_1-1$ such that
 \[
 \frac{q_1r_1+c}{p_1}\in\mathbb{Z}.
 \]
 This gives that the disjoint lattices with cardinality 3 intersecting $\{1,...,n\}$ with respect to the constraints $(pk+a,qk+b)$ are of the form $\{A_1,A_2,A_3\}$, where
\begin{align*}
    A_1&=p\left(p_1k_1+r_1\right)+a,\\
    A_2&=q\left(p_1k_1+r_1\right)+b=p\left(q_1k_1+\frac{q_1r_1+c}{p_1}\right)+a,\\
    A_3&=q\left(q_1k_1+\frac{q_1r_1+c}{p_1}\right)+b.
\end{align*}
Note that 
\[
A_3=(p,q)q_1^2\left(k_1+\frac{r_1}{p_1}+\frac{c}{p_1q_1}\right)+b.
\]
There exists a unique $0\leq r_2\leq p_1-1$ such that the last term of disjoint lattices with cardinality 4 intersecting $\{1,...,n\}$ with respect to the constraints $(pk+a,qk+b)$ are of the form
	\begin{align*}
	&q_1\frac{(p,q)q_1^2\left(\left(p_1k_2+r_2\right)+\frac{r_1}{p_1}+\frac{c}{p_1q_1}+\frac{c}{q_1^2}\right)}{p_1}+b\\
 =&(p,q)q_1^3\left(k_2+\frac{r_2}{p_1}+\frac{r_1}{p_1^2}+\frac{c}{p_1^2q_1}+\frac{c}{p_1q_1^2}\right)+b.
	\end{align*}

Then, by a similar process, for $\ell\geq 3$, there exists a sequence $\{r_i\}_{i=1}^{\ell-2}$ with $0\leq r_i\leq p_1-1$ such that the last term of disjoint lattices with cardinality $\ell$ intersecting $\{1,...,n\}$ with respect to the constraints $(pk+a,qk+b)$ are of the form 
\begin{equation}\label{eq 2.1-4}
    (p,q)q_1^{\ell-1}\left(k_{\ell-2}+a_{\ell}+b_{\ell}\right)+b,
\end{equation}
where 
\[a_\ell=\sum_{i=1}^{\ell-2}\frac{r_{\ell-1-i}}{p_1^i}\quad \mbox{and}\quad b_\ell=\sum_{i=1}^{\ell-2}\frac{c}{p_1^i q_1^{\ell-1-i}}\quad (\forall \ell\geq 2).\]
 
Now, we compute the number of disjoint lattices having $\ell$ ($\ell\geq 2$) members of $\{1,...,n\}$. By (\ref{eq 2.1-4}), we have that this number is equal to the number of $k$ satisfying  
\[
\left\lfloor\frac{n-b}{(p,q)q_1^\ell}-a_{\ell+1}-b_{\ell+1}\right\rfloor< k\leq \left\lfloor\frac{n-b}{(p,q)q_1^{\ell-1}}-a_{\ell}-b_{\ell}\right\rfloor.
\]

In fact, the probability that a lattice has its smallest member not of the form $qk+b$ is $\frac{q_1-1}{q_1}$. Thus, the number of disjoint lattices having exactly $\ell$ ($\ell\geq 1$) members of $\{1,...,n\}$ is 
\begin{equation}\label{eq 2.1-5}
   D_\ell(n):= \frac{q_1-1}{q_1}\left(\left\lfloor\frac{n-b}{(p,q)q_1^{\ell-1}}-a_\ell-b_\ell\right\rfloor-\left\lfloor\frac{n-b}{(p,q)q_1^\ell}-a_{\ell+1}-b_{\ell+1}\right\rfloor\right),
\end{equation}

For all $i\geq 1$, $0\leq r_i \leq p_1-1$, we have 
\begin{equation}\label{eq 2.1-6}
    a_\ell\leq \sum_{i=1}^\infty \frac{p_1-1}{p_1^i}<\infty~ (\forall \ell \geq 2),
\end{equation}
and since $1\leq p_1<q_1$, we have 
\begin{equation}
    b_\ell\leq \frac{(\ell-2)c}{p_1^{\ell-1}} <\infty~(\forall \ell \geq 2).
\end{equation}
 Thus, by (\ref{eq 2.1-4}), (\ref{eq 2.1-5}) and (\ref{eq 2.1-6}), the density of disjoint lattices that intersect $\{1,...,n\}$ exactly $\ell\geq 2$ members (as $n\to\infty$) is 
\begin{equation}\label{eq 2.1-7}
\lim_{n\to\infty}\frac{D_\ell(n)}{n}=\frac{(q_1-1)^2}{(p,q)q_1^{\ell+1}}.
\end{equation}

It remains to compute the density of disjoint lattices that intersect $\{1,...,n\}$ exactly one member as $n\to\infty$. By (\ref{eq 2.1-7}), the density is 
\begin{equation}\label{eq 2.1-8}
  \lim_{n\to\infty}\frac{n-\sum_{\ell=2}^n D_\ell(n)}{n}= 1-\sum_{\ell=2}^\infty\frac{\ell(q_1-1)^2}{(p,q)q_1^{\ell+1}}=1-\frac{2q_1-1}{(p,q)(q_1)^2}. 
\end{equation}

On the other hand, for $n\geq 1$, we have
\begin{equation}\label{eq 2.1-9}
    \left|\mathcal{P}\left(X^{p,q;a,b}_{A},\{1,...,n\}\right)\right|=m^{n-\sum_{\ell=2}^n D_\ell(n)}\prod_{\ell=2}^{n}\left|A^{\ell-1}\right|^{D_\ell(n)}.
\end{equation}
Therefore, by (\ref{eq 2.1-7}), (\ref{eq 2.1-8}) and (\ref{eq 2.1-9}), we have 
\begin{align*}
\dim_MX^{p,q;a,b}_{A}&=\lim_{n\to\infty}\frac{\log_m\left|\mathcal{P}\left(X^{p,q;a,b}_{A},\{1,...,n\}\right)\right|}{n}\\
    &=1-\frac{2q_1-1}{(p,q)(q_1)^2}+\frac{(q_1-1)^2}{(p,q)}\sum_{i=2}^\infty \frac{\log_m |A^{i-1}|}{(q_1)^{i+1}}.
\end{align*}
\end{proof}

\section{Proof of Theorem \ref{Thm: 3}}\label{sec3}
Let $X_A\subseteq \Sigma_m^{\mathbb{N}}$ be the SFT associated with a transition matrix $A$. For $k\geq 1$, let $\alpha_k$ be the family of $k$-cylinders of $X_A$, that is,
\begin{equation*}
\alpha_k=\left\{[x_1,...,x_k]: x=(x_i)_{i=1}^\infty \in X_A\right\},
\end{equation*}
where 
\begin{equation*}
    [x_1,...,x_n]=\left\{y=(y_i)_{i=1}^\infty\in X_A: y_i=x_i \mbox{ for all }1\leq i\leq n \right\}.
\end{equation*}
Let $\mu$ be a probability measure on $X_A$, the $\mu$-entropy $H^\mu (\alpha_k)$ of $\alpha_k$ is defined by 
\begin{align*}
H^\mu (\alpha_k)=-\sum_{B\in \alpha_k} \mu(B)\log_m \mu(B).
\end{align*}

The following lemma is needed for the lower and upper bounds of the Hausdorff dimension of $X_A^{p,q;a,b}$.
\begin{lemma}[Billingsley's lemma]\label{lemma B}
    Let $E$ be a Borel set in $\Sigma_m^{\mathbb{N}}$ and let $\nu$ be a finite Borel measure on $\Sigma_m^{\mathbb{N}}$.
    \begin{enumerate}
    \item We have $\dim_HE\geq c$ if $\nu(E)>0$ and 
    \begin{equation*}
        \liminf_{n\to\infty}\frac{-\log_m \nu([x_1x_2\cdots x_n])}{n}\geq c \quad\mbox{for }\nu\mbox{-a.e. }x\in E.
    \end{equation*}
    \item We have $\dim_HE\leq c$ if 
    \begin{equation*}
        \liminf_{n\to\infty}\frac{-\log_m \nu([x_1x_2\cdots x_n])}{n}\leq c \quad\mbox{for all }x\in E.
      \end{equation*}
      \end{enumerate}
\end{lemma}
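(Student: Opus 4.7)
The statement is the symbolic form of the classical Billingsley lemma: we endow $\Sigma_m^{\mathbb{N}}$ with the metric in which cylinders $[x_1\cdots x_n]$ are balls of diameter $m^{-n}$, so that $\tfrac{-\log_m \nu([x_1\cdots x_n])}{n}$ plays the role of the usual lower local dimension. The plan is to deduce (1) from the mass distribution principle applied to a suitable uniform subset of $E$, and (2) from a covering argument that exploits the fact that any two cylinders in $\Sigma_m^{\mathbb{N}}$ are either nested or disjoint.

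For (1), fix any $c' < c$. The hypothesis gives, for $\nu$-a.e.\ $x \in E$, an integer $N(x)$ such that $\nu([x_1\cdots x_n]) \leq m^{-nc'}$ for all $n \geq N(x)$. By monotone continuity of $\nu$ applied to the nested sequence $E_N := \{x \in E : N(x) \leq N\}$, I choose $N$ with $\nu(E_N) \geq \nu(E)/2 > 0$. Any cover $\{C_i\}$ of $E_N$ by cylinders of length $n_i \geq N$ then satisfies
\[
\sum_i (\mathrm{diam}\, C_i)^{c'} = \sum_i m^{-n_i c'} \geq \sum_i \nu(C_i) \geq \nu(E_N) > 0,
\]
so $\mathcal{H}^{c'}_{m^{-N}}(E_N) \geq \nu(E_N)$, whence $\mathcal{H}^{c'}(E) > 0$ and $\dim_H E \geq c'$. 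Letting $c' \nearrow c$ finishes part (1).

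For (2), fix $\epsilon > 0$ and $N \geq 1$. For each $x \in E$ the liminf assumption supplies some $n(x) \geq N$ with $\nu([x_1\cdots x_{n(x)}]) \geq m^{-n(x)(c+\epsilon)}$. Because $\Sigma_m^{\mathbb{N}}$ carries only countably many cylinders, the family $\{[x_1\cdots x_{n(x)}] : x \in E\}$ is automatically countable; retaining only its maximal elements produces a pairwise disjoint cover $\{C_j\}$ of $E$ with $\mathrm{diam}\, C_j \leq m^{-N}$ and
\[
\sum_j (\mathrm{diam}\, C_j)^{c+\epsilon} = \sum_j m^{-n_j(c+\epsilon)} \leq \sum_j \nu(C_j) \leq \nu\bigl(\Sigma_m^{\mathbb{N}}\bigr) < \infty.
\]
Sending $N \to \infty$ gives $\mathcal{H}^{c+\epsilon}(E) \leq \nu(\Sigma_m^{\mathbb{N}})$, so $\dim_H E \leq c + \epsilon$; then $\epsilon \to 0$ finishes part (2).

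The only delicate step is the disjointification in (2), where one must verify that extracting maximal cylinders still yields a genuine cover whose total $\nu$-mass is controlled. This is immediate in symbolic space because any two cylinders either coincide, are disjoint, or one strictly contains the other; hence the maximal ones in any countable family form a disjoint cover of the same set. No further obstacle arises: once this nesting property is used, the proof reduces to the mass distribution principle and a standard covering bound.
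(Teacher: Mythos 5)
Your proof is correct: part (1) is the standard mass distribution argument on the sublevel sets $E_N$ (with the usual implicit convention that covering cylinders may be assumed to meet $E_N$, so that $\nu(C_i)\leq m^{-n_ic'}$), and part (2) correctly exploits the ultrametric nesting of cylinders to disjointify the cover, the maximal elements existing because all lengths are bounded below by $N$. The paper itself states this as the classical Billingsley lemma and gives no proof, so there is nothing to compare against; your argument is the standard one in symbolic space and fills that gap completely.
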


The following proposition gives the lower bound of the Hausdorff dimension of $X_A^{p,q;a,b}$.
\begin{proposition}[Lower bound]\label{lemma 3.1}
Let $A=[a_{i,j}]_{m\times m}$ be an $m\times m$ irreducible binary matrix, $1\leq p<q$ and $a,b\in\mathbb{Z}$.
\begin{enumerate}
\item If $(p,q)\nmid (b-a)$, then 
\begin{align*}
\dim_HX^{p,q;a,b}_A&\geq 1-\frac{1}{p}-\frac{1}{q}+\frac{1}{p}\log_m \sum_{i=0}^{m-1}\left(a_i\right)^{\frac{p}{q}},
\end{align*}
where $a_i=\sum_{j=0}^{m-1} a_{i,j}$.
\item If $(p,q)\mid (b-a)$ and $p>1$, then 
\begin{align*}
	\dim_HX^{p,q;a,b}_A\geq 1-\frac{1}{p}-\frac{1}{q}+\frac{1}{\left(p,q\right)p_1q_1}+\sum_{i=2}^\infty\left(\sum_{j=1}^i P_{i,j}\right) \log_m t_{\phi;i},
\end{align*}	
where $P_{i,j}$ and $t_{\phi;i}$ are defined in (\ref{eq 3.1-1}) and (\ref{eq 3.1-2}).
\item If $p=1$ and $A$ is primitive, then 
\[
\dim_H X^{p,q;a,b}_{A}\geq\frac{q-1}{q} \log_m \sum_{i=0}^{m-1} t_i ,
\]
where $(t_i)_{i=0}^{m-1}$ is a unique positive vector satisfying $t_i^q=\sum_{j=0}^{m-1} a_{i,j}t_j$.
\end{enumerate}
\end{proposition}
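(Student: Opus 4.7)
My unified strategy is to apply Billingsley's lemma (Lemma \ref{lemma B}) after constructing, in each case, a probability measure $\nu$ on $X_A^{p,q;a,b}$ as an independent product over the disjoint lattice components of $\mathbb{N}$ identified in the proof of Theorem \ref{Thm: 1}. On each component I will place a Markov-type distribution compatible with the SFT rule $A$, depending on a family of weight parameters to be optimized. Independence of the components together with the strong law will then reduce the local dimension $\liminf_{n\to\infty}(-\log_m\nu([x_1\cdots x_n]))/n$, $\nu$-a.s., to a weighted average of the per-component entropies with weights given by the asymptotic lattice densities already computed in Section \ref{sec2}. Maximizing this expression over the measure parameters will produce the claimed lower bound.

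For case (1), the components meeting $\{1,\dots,n\}$ are singletons (density $1-1/p-1/q$), complete $2$-lattices $\{pk+a,qk+b\}$ (density $1/q$ of pairs, occupying $2/q$ of positions), and right-truncated $2$-lattices where only $pk+a$ is visible (density $1/p-1/q$). I place the uniform measure on singletons and on each $2$-lattice the distribution $\nu_2(i,j)=p_i\,a_{i,j}/a_i$ for parameters $(p_i)$ to be chosen; uniformity of the conditional on allowed $j$'s maximizes the conditional entropy, and since $H(\nu_2)=H(p)+\sum_i p_i\log_m a_i$ the local dimension reduces to
\[
1-\tfrac{1}{p}-\tfrac{1}{q}+\tfrac{1}{p}\sum_{i=0}^{m-1}p_i\log_m\!\Bigl(\tfrac{a_i^{p/q}}{p_i}\Bigr),
\]
which by the Gibbs inequality is maximized at $p_i=a_i^{p/q}/\sum_k a_k^{p/q}$, giving the claimed bound.

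For case (3), with $p=1$ and $A$ primitive, the decomposition of $\mathbb{N}$ consists of infinite chains of the form $\{j,\,qj+b-qa,\,q^2j+\cdots,\dots\}$ where $j$ ranges over positions that are not successors. I adapt the measure of Kenyon, Peres and Solomyak: primitivity of $A$ yields the unique positive solution $(t_i)$ of $t_i^q=\sum_j a_{i,j}t_j$, and on each chain I define a Markov measure with initial distribution proportional to $t_i$ and transition probabilities $a_{i,j}t_j/t_i^q$. Summing the log-measure contributions from chains truncated at depth $k$ (density $(q-1)/q^{k+1}$) and applying the eigenequation recursively, the computation telescopes to $\frac{q-1}{q}\log_m\sum_i t_i$.

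The main obstacle is case (2), where lattices of every length $\ell\ge 1$ appear and the terminal position of a length-$\ell$ lattice depends on the residue sequence $(r_j)_{j=1}^{\ell-2}$ from (\ref{eq 2.1-4}). My plan is to introduce weight variables $t_{\phi;i}$ indexed both by the alphabet letter $i$ and by a \emph{branch pattern} $\phi$ encoding these residues, required to satisfy a coupled nonlinear system generalizing the eigenequation of case (3); these will be the $t_{\phi;i}$ of (\ref{eq 3.1-2}), while the coefficients $P_{i,j}$ of (\ref{eq 3.1-1}) will record the asymptotic density of length-$i$ lattices truncated at depth $j$, read off from (\ref{eq 2.1-5})--(\ref{eq 2.1-7}). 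Existence and positivity of $(t_{\phi;i})$ will be obtained by a Perron--Frobenius type fixed-point argument for the associated nonlinear operator on a product of simplices. Once the product measure is defined and verified to be concentrated on $X_A^{p,q;a,b}$, the local dimension computation proceeds as in cases (1) and (3), but the bookkeeping across all $\ell$ produces the stated double sum after telescoping. Establishing well-posedness of the nonlinear system and controlling the tail contributions from long lattices is the technical heart of the argument.
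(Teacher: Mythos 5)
Your treatment of cases (1) and (3) follows essentially the same route as the paper: a product measure over the lattice components of the decomposition from Theorem \ref{Thm: 1}, Billingsley's lemma (Lemma \ref{lemma B}), and a per-component entropy optimization weighted by the asymptotic densities $1-\frac1p-\frac1q$, $\frac1p-\frac1q$, $\frac1q$. Your Gibbs-inequality computation in case (1) recovers exactly the paper's optimal $\mu_2$, and case (3) is the Kenyon--Peres--Solomyak argument that the paper itself simply invokes.

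The problem is case (2), which is the technical heart of the proposition and which you have only planned, not proved --- and planned in a direction that does not match the structure of the problem. Because $p>1$, every lattice in the decomposition is \emph{finite}: a chain $k, g(k), g^2(k),\dots$ with $g(x)=q\frac{x-a}{p}+b$ terminates as soon as $g^j(k)-a\not\equiv 0 \pmod{p_1}$. Hence for each fixed cardinality $i$ the quantity $P_{i,1}H^{\mu_i}(\alpha_1)+\sum_{j=2}^i P_{i,j}H^{\mu_i}(\alpha_j)$ is maximized over measures on words of length $i$ by a finite-dimensional, explicitly solvable optimization (nested H\"older/Lagrange steps), whose solution is the closed-form recursive product defining $t_{j_1,\dots,j_i;i}$; the quantity $t_{\phi;i}$ in (\ref{eq 3.1-2}) is merely the resulting normalizing constant ($\phi$ is the empty word, and $i$ indexes the lattice cardinality, not an alphabet letter or a residue ``branch pattern''). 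No coupled nonlinear eigen-system arises and no Perron--Frobenius fixed-point argument is needed; the only place an eigenequation $t_i^q=\sum_j a_{i,j}t_j$ enters is case (3), where the chains are genuinely infinite. What your sketch defers is precisely the content of the proof: (a) carrying out the explicit optimization for each $i$ and showing the optimal value telescopes to $\bigl(\sum_{j=1}^i P_{i,j}\bigr)\log_m t_{\phi;i}$, (b) computing the densities $P_{i,j}$ of (\ref{eq 3.1-1}) (your description of them is consistent with (\ref{eq 2.1-5})--(\ref{eq 2.1-7}), but you never produce the formulas), and (c) controlling the tail $\sum_{i>s}$ uniformly in $n$, which the paper does via $\log_m|A^{j-1}|\le j+2$ and the geometric decay of $P_{i,j}$ in $i$. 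As it stands, case (2) asserts well-posedness of a system that does not occur in the correct argument and omits all of the estimates, so the proposal has a genuine gap there.
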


\begin{proof}
    \item[\bf (1)] The idea of the proof is as follows. \\
    (i) Decompose $\mathbb{N}=A_1 \sqcup A_2$ as in Theorem \ref{Thm: 1}, where
    $A_1$ is the collection of disjoint lattices with cardinality 1, and $A_2$ is the collection of disjoint lattices with cardinality 2.\\
    (ii) Let $\mu_1$ be a probability measure on a lattice with cardinality 1, and $\mu_2$ be a probability measure on the lattice of the form $\{pk+a,qk+b\}$.\\
    (iii) Define a probability measure on $X_A^{p,q;a,b}$ by defining the measure on cylinder sets in $\alpha_n$ of $X_A^{p,q;a,b}$. That is, for any $[x_1,...,x_n]\in \alpha_n$,
    \begin{align*}
        \mathbb{P}_{\mu_1,\mu_2}([x_1\cdots x_n])&:=\prod_{U\in A_1\cap [1,n]}\mu_1([x_{U}])\prod_{V\in A_2\cap [1,n]}\mu_2([x_{V}]),
    \end{align*}
    where $A_i\cap [1,n]=\{U\cap [1,n]: U\in A_i\}$.\\
    (iv) Compute the following limits:
    \begin{equation}\label{eq L1}
    \begin{aligned}
        \lim_{n\to\infty}\frac{L(1,n,1)}{n}&=1-\frac{1}{p}-\frac{1}{q},\\
        \lim_{n\to\infty}\frac{L(2,n,1)}{n}&=\frac{1}{p}-\frac{1}{q},\\
        \lim_{n\to\infty}\frac{L(2,n,2)}{n}&=\frac{1}{q},
    \end{aligned}
    \end{equation}
    where 
    \begin{equation*}
        L(i,n,j)=\#\{U:U\in A_i\cap [1,n],|U|=j\}.
    \end{equation*}
    (v) Maximize the following two quantities:
\begin{align*}
    & \max_{\mu_1} \left(1-\frac{1}{p}-\frac{1}{q}\right)H^{\mu_1}(\alpha_1),\\
     &\max_{\mu_2} \left(\frac{1}{p}-\frac{1}{q}\right) H^{\mu_2}(\alpha_1)+ \frac{1}{q}H^{\mu_2}(\alpha_2).
\end{align*}
    Then, 
    \begin{equation}\label{eq 3.1-3}
    \begin{aligned}
        &\mu_1([i])=\frac{1}{m}~(\forall 0\leq i \leq m-1),\\
        &\mu_2([i])=\frac{(a_i)^{\frac{p}{q}}}{\sum_{j=0}^{m-1}\left(a_j\right)^{\frac{p}{q}}}~(\forall 0\leq i\leq m-1),\\
        &\mu_2([ij])=\frac{(a_i)^{\frac{p}{q}-1}a_{i,j}}{\sum_{j=0}^{m-1}\left(a_j\right)^{\frac{p}{q}}}~(\forall 0\leq i,j \leq m-1),
    \end{aligned}
    \end{equation}
where $a_i=\sum_{j=0}^{m-1} a_{i,j}$ for all $0\leq i \leq m-1$.
    
 By (\ref{eq L1}), (\ref{eq 3.1-3}) and a similar process in \cite[Equation (28)]{kenyon2012hausdorff}, we have that for $ \mathbb{P}_{\mu_1,\mu_2}$-a.e. $x\in X^{p,q;a,b}_{A}$, 
\begin{align*}
&\liminf_{n\to\infty}\frac{-\log_m \mathbb{P}_{\mu_1,\mu_2}([x_1x_2\cdots x_n])}{n}\\
\geq &\left(1-\frac{1}{p}-\frac{1}{q}\right)H^{\mu_1}\left(\alpha_1\right)+\left(\frac{1}{p}-\frac{1}{q}\right)H^{\mu_2}\left(\alpha_1\right)+\frac{1}{q}H^{\mu_2}\left(\alpha_2\right)\\
=&\left(1-\frac{1}{p}-\frac{1}{q}\right)\sum_{i=0}^{m-1}-\mu_1([i])\log_m \mu_1([i])+\left(\frac{1}{p}-\frac{1}{q}\right)\sum_{i=0}^{m-1}-\mu_2([i])\log_m \mu_2([i])\\
&+\frac{1}{q}\sum_{i,j=0}^{m-1}-\mu_2([ij])\log_m \mu_2([ij])\\
=&1-\frac{1}{p}-\frac{1}{q}+\frac{1}{p}\log_m \sum_{i=0}^{m-1}\left(a_i\right)^{\frac{p}{q}}.
\end{align*}
By Lemma \ref{lemma B}, 
\begin{align*}
\dim_HX^{p,q;a,b}_A&\geq 1-\frac{1}{p}-\frac{1}{q}+\frac{1}{p}\log_m \sum_{i=0}^{m-1}\left(a_i\right)^{\frac{p}{q}}.
\end{align*}

\item[\bf (2)] A similar idea from previous case is applicable to the second case.\\
    (i) Decompose $\mathbb{N}=\sqcup_{i=1}^\infty A_i $ as in Theorem \ref{Thm: 1}, where $A_i$ is the collection of disjoint lattices with cardinality $i$.\\
    (ii) Let $\mu_i$ be a probability measure on a lattice of the form 
    \[\{k,g(k),g(g(k)),...,g^{i-1}(k)\},\]
    where $k-b\neq 0$ (mod $q_1$), $g^j(k)-a= 0$ (mod $p_1$) for all $0\leq j \leq i-1$ and $g^i(k)-a\neq 0$ (mod $p_1$) with $g(x):=q\frac{x-a}{p}+b$.\\
    (iii) Define a probability measure on $X_A^{p,q;a,b}$ by defining the measure on cylinder sets in $\alpha_n$ of $X_A^{p,q;a,b}$. That is, for any $[x_1,...,x_n]\in \alpha_n$,
    \begin{align*}
        \mathbb{P}_{\infty}([x_1\cdots x_n])&:=\prod_{i=1}^\infty\prod_{U\in A_i\cap [1,n]}\mu_i([x_{U}]).
    \end{align*}
    (iv) Compute the following limits:
      \begin{equation}\label{Pij}
        P_{i,j}=\lim_{n\to\infty}\frac{L(i,n,j)}{n}.        
 \end{equation}
For each disjoint lattice with cardinality $i$ in $[1,n]$, it has a probability of $\frac{1}{p_1}$ in $\cup_{j\geq  i+1}A_j$ and a probability of $\frac{p_1-1}{p_1}$ in $A_i$. Similarly, it has a probability of $(\frac{1}{p_1})^{k+1}$ in $\cup_{j\geq  i+k+1}A_j$ and a probability of $(\frac{1}{p_1})^k\frac{p_1-1}{p_1}$ in $A_{i+k}$ for $k\geq 0$. Thus, we have
 \begin{equation}\label{eq 3.1-1}
     \begin{aligned}
     P_{1,1}&=1-\frac{1}{p}-\frac{1}{q}+\frac{1}{(p,q)p_1q_1},\\
     P_{i,1}&=\frac{q_1-1}{q_1}\frac{p_1-1}{p_1}\left(\frac{1}{p}-\frac{1}{q}\right)\left(\frac{1}{p_1}\right)^{i-2},\\
     P_{i,j}&=\frac{\left(q_1-1\right)^2}{\left(p,q\right)\left(q_1\right)^{j+1}} \frac{p_1-1}{\left(p_1\right)^{i-j+1}}~(\forall i\geq j\geq 2).
\end{aligned}
 \end{equation}
(v) Maximize the following quantities:
\begin{equation*}
   \max_{\mu_i} P_{i,1} H^{\mu_i}(\alpha_1)+\sum_{j=2}^i P_{i,j}H^{\mu_{i}}(\alpha_j),~ (\forall i\geq 2).
\end{equation*}
 For $i=2$, we obtain 
\begin{align*}
    &\mu_2([i_1])=\frac{(a_{i_1})^{\frac{P_{2,2}}{P_{2,1}+P_{2,2}}}}{t_{\phi;2}}~(\forall 0\leq i_1 \leq m-1),\\
    &\mu_2([i_1i_2])=\frac{a_{i_1,i_2}(a_{i_1})^{\frac{-P_{2,1}}{P_{2,1}+P_{2,2}}}}{t_{\phi;2}} ~(\forall 0\leq i_1,i_2 \leq m-1),
\end{align*}
where
\begin{equation*}
    t_{\phi;2}=\sum_{j=0}^{m-1}\left(a_j\right)^{\frac{P_{2,2}}{P_{2,1}+P_{2,2}}}.
\end{equation*}
Then,
\begin{equation*}
   P_{2,1} H^{\mu_2}(\alpha_1)+P_{2,2} H^{\mu_2}(\alpha_2)=\left(P_{2,1}+P_{2,2}\right)\log_m t_{\phi;2}.
\end{equation*}

For $i=3$ and $0\leq i_1,i_2,i_3\leq m-1$, we have
\begin{align*}
    &\mu_3([i_1i_2i_3])
    =\frac{a_{i_1,i_2}a_{i_2,i_3}(a_{i_2})^{\frac{-P_{3,2}}{P_{3,2}+P_{3,3}}}\left(\sum_{j=0}^{m-1}a_{i_1,j}(a_j)^{\frac{P_{3,3}}{P_{3,2}+P_{3,3}}}\right)^{\frac{-P_{3,1}}{\sum_{k=1}^3 P_{3,k}}}}{t_{\phi;3}},\\
    &\mu_3([i_1i_2])
    =\frac{a_{i_1,i_2}(a_{i_2})^{\frac{P_{3,3}}{P_{3,2}+P_{3,3}}}\left(\sum_{j=0}^{m-1}a_{i_1,j}(a_j)^{\frac{P_{3,3}}{P_{3,2}+P_{3,3}}}\right)^{\frac{-P_{3,1}}{\sum_{k=1}^3 P_{3,k}}}}{t_{\phi;3}},\\
    &\mu_3([i_1])
    =\frac{\sum_{i_2=0}^{m-1}a_{i_1,i_2}(a_{i_2})^{\frac{P_{3,3}}{P_{3,2}+P_{3,3}}}\left(\sum_{j=0}^{m-1}a_{i_1,j}(a_j)^{\frac{P_{3,3}}{P_{3,2}+P_{3,3}}}\right)^{\frac{-P_{3,1}}{\sum_{k=1}^3 P_{3,k}}}}{t_{\phi;3}},
\end{align*}
where
\begin{equation*}
    t_{\phi;3}=\sum_{r_1=0}^{m-1}\sum_{r_2=0}^{m-1}a_{r_1,r_2}(a_{r_2})^{\frac{P_{3,3}}{P_{3,2}+P_{3,3}}}\left(\sum_{j=0}^{m-1}a_{r_1,j}(a_j)^{\frac{P_{3,3}}{P_{3,2}+P_{3,3}}}\right)^{\frac{-P_{3,1}}{\sum_{k=1}^3 P_{3,k}}}.
\end{equation*}
Then,
\begin{align*}
    P_{3,1} H^{\mu_3}(\alpha_1)+P_{3,2} H^{\mu_3}(\alpha_2)+P_{3,3}H^{\mu_3}(\alpha_3)
    =\left(\sum_{k=1}^3 P_{3,k}\right)\log_m t_{\phi;3}.
\end{align*}

Similarly, for $i\geq 2$,
\begin{align*}
    &\mu_{i}([j_1j_2\cdots j_i])=a_{j_1,j_2}\cdots a_{j_{i-1},j_i}\frac{t_{j_1,...,j_i;i}}{t_{\phi;i}},
\end{align*}
where
\begin{align*}
    t_{j_1,...,j_i;i}&=\prod_{k=1}^{i-1}f_k(j_{i-k},i),
\end{align*}
and for $1\leq k \leq i-1$, $f_{k}(j_{i-k},i)$ is defined recursively as
\begin{align*}
f_k(j_{i-k},i)&=\left[\sum_{b_1,b_2,...,b_{k-1}=0}^{m-1}a_{j_{i-k},b_1}\prod_{d=1}^{k-2}a_{b_d,b_{d+1}}a_{b_{k-1}}\prod_{c=1}^{k-1}f_{c}(b_{k-c},i)\right]^{\frac{-P_{i,i-k}}{\sum_{\ell=0}^k P_{i,i-\ell}}}
\end{align*}
 and
\begin{equation}\label{eq 3.1-2}
    t_{\phi;i}=\sum_{j_1,...,j_i=0}^{m-1}a_{j_1,j_2}a_{j_2,j_3}\cdots a_{j_{i-1},j_i}t_{j_1,...,j_i;i}.
\end{equation}

For $1\leq k \leq i-1$,
\begin{equation*}
    \mu_{i}([j_1j_2\cdots j_k])=\sum_{\ell=0}^{m-1}\mu_{i}([j_1j_2\cdots j_k \ell]).
\end{equation*}
Thus,
\begin{equation}\label{eq 3.1-5}
    P_{i,1} H^{\mu_i}(\alpha_1)+\sum_{j=2}^i P_{i,j}H^{\mu_{i}}(\alpha_j)=\left(\sum_{j=1}^i P_{i,j}\right)\log_m t_{\phi;i}.
\end{equation}

Note that $\mu_1([i_1])=\frac{1}{m}$ for all $0\leq i_1\leq m-1$. Combining (\ref{eq 3.1-1}), (\ref{eq 3.1-5}) and a similar process in \cite[Equation (28)]{kenyon2012hausdorff}, we have that for $ \mathbb{P}_{\infty}$-a.e. $x\in X^{p,q;a,b}_{A}$,
\begin{align*}
&\liminf_{n\to\infty}\frac{-\log_m \mathbb{P}_{\infty}([x_1x_2\cdots x_n])}{n}\\
\geq &\left(1-\frac{1}{p}-\frac{1}{q}+\frac{1}{(p,q)p_1q_1}\right)H^{\mu_1}\left(\alpha_1\right)+\sum_{i=2}^\infty\left[  P_{i,1} H^{\mu_i}(\alpha_1)+\sum_{j=2}^i P_{i,j}H^{\mu_{i}}(\alpha_j)\right]\\
=&1-\frac{1}{p}-\frac{1}{q}+\frac{1}{(p,q)p_1q_1}+\sum_{i=2}^\infty\left(\sum_{j=1}^i P_{i,j}\right) \log_m t_{\phi;i}.
\end{align*}
By Lemma \ref{lemma B},
\begin{align*}
	\dim_HX^{p,q;a,b}_A\geq& 1-\frac{1}{p}-\frac{1}{q}+\frac{1}{\left(p,q\right)p_1q_1}+\sum_{i=2}^\infty\left(\sum_{j=1}^i P_{i,j}\right) \log_m t_{\phi;i}.
\end{align*}	
    \item[\bf (3)] The proof is similar to that of Theorem \ref{Thm: 2}, we omit it here. 
\end{proof}

Now, we are almost ready for proving Theorem \ref{Thm: 3}. We need the following H\"{o}lder Inequality \cite[Section 2.7]{inequality1952}.

\begin{lemma}[H\"{o}lder Inequality]\label{lemma H}
    Let $\lambda_a$ and $\lambda_b$ be positive real numbers with $\lambda_a+\lambda_b=1$. Let $a_1,...,a_n,b_1,...,b_n$ be positive real numbers. Then
\begin{equation*}
    (a_1+\cdots+a_n)^{\lambda_a} (b_1+\cdots +b_n)^{\lambda_b}\geq \sum_{i=1}^n a_i^{\lambda_a}b_i^{\lambda_b}.
\end{equation*}
Equality holds if $a_1:a_2:\cdots:a_n\equiv b_1:b_2:\cdots :b_n $.
\end{lemma}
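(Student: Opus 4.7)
The plan is to reduce the stated inequality to the elementary Young inequality
\[
x^{\lambda_a} y^{\lambda_b} \leq \lambda_a x + \lambda_b y \qquad (x,y>0,\ \lambda_a+\lambda_b=1),
\]
which itself follows in one line from the concavity of $\log$: the bound $\lambda_a\log x + \lambda_b\log y \leq \log(\lambda_a x + \lambda_b y)$ exponentiates to the claim, and is tight if and only if $x=y$.

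With that pointwise inequality in hand, I would set $A = \sum_{i=1}^n a_i$ and $B = \sum_{i=1}^n b_i$ (both strictly positive by hypothesis) and apply Young to $x_i = a_i/A$ and $y_i = b_i/B$ for each $i$. Summing in $i$, the right-hand side telescopes to $\lambda_a + \lambda_b = 1$, while the left-hand side equals $A^{-\lambda_a}B^{-\lambda_b}\sum_{i=1}^n a_i^{\lambda_a}b_i^{\lambda_b}$. Multiplying through by $A^{\lambda_a}B^{\lambda_b}$ yields
\[
\sum_{i=1}^n a_i^{\lambda_a}b_i^{\lambda_b} \leq \Bigl(\sum_{i=1}^n a_i\Bigr)^{\lambda_a}\Bigl(\sum_{i=1}^n b_i\Bigr)^{\lambda_b},
\]
which is the desired bound.

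For the equality statement, I would observe that each instance of Young's inequality is tight precisely when $x_i = y_i$, i.e.\ $a_i/A = b_i/B$ for every $i$; this is exactly the proportionality condition $a_1:a_2:\cdots:a_n \equiv b_1:b_2:\cdots :b_n$ asserted in the lemma. I do not foresee any real obstacle: the result is completely classical, and the only points requiring care are that all quantities are strictly positive (so that the non-integer powers and the normalizations $A,B$ are well defined) and that the tightness of Young is tracked uniformly across all indices in deriving the equality case.
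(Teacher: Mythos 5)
Your proof is correct: the normalization $x_i=a_i/A$, $y_i=b_i/B$ followed by the weighted AM--GM (Young) inequality and summation is the standard argument, and your tracking of the equality case via strict concavity of $\log$ is accurate. The paper offers no proof of its own --- it simply cites Hardy--Littlewood--P\'olya, Section 2.7 --- so your write-up supplies a complete, classical derivation consistent with what that reference contains; nothing is missing.
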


\begin{proof}[Proof of Theorem \ref{Thm: 3}] 
\item[\bf (1)] Fix any $x=(x_k)_{k=1}^\infty\in X^{p,q;a,b}_A$, we claim that
\begin{equation}\label{claim1}
     \liminf_{n\to\infty}\frac{-\log_m \mathbb{P}_{\mu_1,\mu_2}([x_1x_2\cdots x_n])}{n}\leq 1-\frac{1}{p}-\frac{1}{q}+\frac{1}{p}\log_m \sum_{i=0}^{m-1}\left(a_i\right)^{\frac{p}{q}}.
\end{equation}

In fact, by (\ref{eq 3.1-3}), we have 
\begin{align*}
    &-\log_m \mathbb{P}_{\mu_1,\mu_2}([x_1x_2\cdots x_n])\\
    =&\sum_{U\in A_1\cap [1,n]}-\log_m \mu_1([x_U])+\sum_{V\in A_2\cap [1,n],|V|=1}-\log_m\mu_2([x_V])\\
    &+\sum_{V\in A_2\cap [1,n],|V|=2}-\log_m\mu_2([x_V])\\
    =&\sum_{U\in A_1\cap [1,n]}-\log_m \frac{1}{m}+\sum_{V\in A_2\cap [1,n],|V|=1}-\log_m\frac{\nu_2([x_V])}{\sum_{j=0}^{m-1}(a_j)^{\frac{p}{q}}}\\
    &+\sum_{V\in A_2\cap [1,n],|V|=2}-\log_m\frac{\nu_2([x_V])}{\sum_{j=0}^{m-1}(a_j)^{\frac{p}{q}}}\\
    =&\left(1-\frac{1}{p}-\frac{1}{q}\right)n\\
    &+\left(\frac{1}{p}-\frac{1}{q}\right)n\log_m \sum_{j=0}^{m-1}(a_j)^{\frac{p}{q}}+\sum_{V\in A_2\cap [1,n],|V|=1}-\log_m\nu_2([x_V])\\
    &+\frac{n}{q}\log_m \sum_{j=0}^{m-1}(a_j)^{\frac{p}{q}}+\sum_{V\in A_2\cap [1,n],|V|=2}-\log_m\nu_2([x_V]),
\end{align*}
where $\nu_2([i])=(a_i)^{\frac{p}{q}}$ and $\nu_2([ij])=(a_i)^{\frac{p}{q}-1}a_{i,j}$.

Since
\begin{align*}
    &\sum_{V\in A_2\cap [1,n],|V|=1}-\log_m\nu_2([x_V])+\sum_{V\in A_2\cap [1,n],|V|=2}-\log_m\nu_2([x_V])\\
    =&\sum_{i=0}^{m-1}L(2,n,1,i,x)\left(-\log_m\nu_2([i])\right)+\sum_{i,j=0}^{m-1}L(2,n,2,ij,x)\left(-\log_m\nu_2([ij])\right)\\
    =&\sum_{i=0}^{m-1}L(2,n,1,i,x)\frac{-p}{q}\log_m a_i+\sum_{i,j=0}^{m-1}L(2,n,2,ij,x)\left(1-\frac{p}{q}\right)\log_m a_i,
\end{align*}
then, by (\ref{eq L1}), we have
\begin{align*}
    &\frac{1}{n}\left(\sum_{V\in A_2\cap [1,n],|V|=1}-\log_m\nu_2([x_V])+\sum_{V\in A_2\cap [1,n],|V|=2}-\log_m\nu_2([x_V])\right)\\
    =&\left[\left(\frac{1}{p}-\frac{1}{q}\right)\frac{\sum_{i=0}^{m-1}L(2,n,1,i,x)}{L(2,n,1)}\frac{-p}{q}+\frac{1}{q}\frac{\sum_{i,j=0}^{m-1}L(2,n,2,ij,x)}{L(2,n,2)}\left(1-\frac{p}{q}\right)\right]\log_m a_i,
\end{align*}
where 
\begin{equation*}
    L(k,n,s,v_1\cdots v_s,x)=\#\{U:U\in A_k\cap [1,n],|U|=s,x_U=v_1\cdots v_s\}.
\end{equation*}
Without loss of generality, after choosing further subsequence finitely many times, we may assume that there exists a subsequence $\{\ell_k\}_{k=1}^\infty$ of $\mathbb{N}$ such that the following limits exist:
\begin{align*}
   R_{v_1}(x)&=\lim_{k\to\infty} \frac{L(2,\ell_k,1,v_1,x)}{L(2,\ell_k,1)}~(\forall 0\leq v_1 \leq m-1),\\
   R_{v_1v_2}(x)&=\lim_{k\to\infty} \frac{L(2,\ell_k,2,v_1v_2,x)}{L(2,\ell_k,2)} ~ (\forall 0\leq v_1,v_2 \leq m-1).
\end{align*}
Then, it is not difficult to see that
\begin{align*}
   R_{v_1}'(x)&=R_{v_1}(x) ~(\forall 0\leq v_1 \leq m-1),
\end{align*}
where $R_{v_1}'(x):= \sum_{v_2=0}^{m-1}R_{v_1 v_2}(x)$. 

Then,
	\begin{align*}
	 &\liminf_{n\to\infty}\frac{-\log_m \mathbb{P}_{\mu_1,\mu_2}([x_1x_2\cdots x_n])}{n}\\
  \leq &1-\frac{1}{p}-\frac{1}{q}+\frac{1}{p}\log_m \sum_{i=0}^{m-1}\left(a_i\right)^{\frac{p}{q}}+\left(\frac{1}{q}-\frac{p}{q^2}\right)\sum_{i=0}^{m-1}\left|R_{i}'(x)-R_i(x)\right|\log_m a_i.
\end{align*}
Since $R_i'(x)=R_i(x)$ for all $0\leq i \leq m-1$ and for all $x\in X^{p,q;a,b}_A$, we derive our claim (\ref{claim1}).

Then, by Lemma \ref{lemma B},
	\begin{equation*}
	\dim_HX^{p,q;a,b}_A\leq 1-\frac{1}{p}-\frac{1}{q}+\frac{1}{p}\log_m \sum_{i=0}^{m-1}\left(a_i\right)^{\frac{p}{q}}.
	\end{equation*}
Finally, by Proposition \ref{lemma 3.1}, we have
 \begin{equation*}
	\dim_HX^{p,q;a,b}_A= 1-\frac{1}{p}-\frac{1}{q}+\frac{1}{p}\log_m \sum_{i=0}^{m-1}\left(a_i\right)^{\frac{p}{q}}.
	\end{equation*}

\item[\bf (2)] Fix any $x=(x_k)_{k=1}^\infty\in X^{p,q;a,b}_A$. We claim that
\begin{equation}\label{claim2}
    \begin{aligned}
     &\liminf_{n\to\infty}\frac{-\log_m \mathbb{P}_{\infty}([x_1x_2\cdots x_n])}{n}\\
     \leq& 1-\frac{1}{p}-\frac{1}{q}+\frac{1}{\left(p,q\right)p_1q_1}+\sum_{i=2}^\infty\left(\sum_{j=1}^i P_{i,j}\right) \log_m t_{\phi;i}.
\end{aligned}
\end{equation}

In fact, we have
\begin{align*}
    &-\log_m \mathbb{P}_{\infty}([x_1x_2\cdots x_n])\\
    =&\sum_{i=1}^\infty\sum_{j=1}^i \sum_{U\in A_i\cap [1,n],|U|=j}-\log_m \mu_i([x_U])\\
    =&\sum_{U\in A_1\cap [1,n]}-\log_m \frac{1}{m}+\sum_{i=2}^\infty\sum_{j=1}^i \sum_{U\in A_i\cap [1,n],|U|=j}-\log_m \frac{t_{x_U;i}}{t_{\phi;i}},
\end{align*}
where 
\begin{equation*}
    t_{x_U;i}:=\sum_{\ell_1,...,\ell_{i-j}=0}^{m-1}a_{x_{u_j},\ell_1}a_{\ell_1,\ell_2}\cdots a_{\ell_{i-j-1,i-j}}t_{x_U\ell_1\cdots \ell_{i-j};i},
\end{equation*}
and $t_{x_U\ell_1\cdots \ell_{i-j};i}$ is chosen from the definition of $\mu_i$ for all $i\geq 2$.
Then,
    \begin{equation}\label{eq 3.4-3}
        \begin{aligned}
    &-\log_m \mathbb{P}_{\infty}([x_1x_2\cdots x_n])\\
    =&\left[1-\frac{1}{p}-\frac{1}{q}+\frac{1}{\left(p,q\right)p_1q_1}+\sum_{i=2}^\infty\left(\sum_{j=1}^i P_{i,j}\right) \log_m t_{\phi;i}\right]n+E(n,x),
\end{aligned}
    \end{equation}
where 
\[
E(n,x)=\sum_{i=2}^\infty\sum_{j=1}^i \sum_{U\in A_i\cap [1,n],|U|=j}-\log_m t_{x_U;i}.
\]
Recalling the definition (\ref{Pij}) of $P_{i,j}$, we have that for $s\geq 2$,
\begin{equation}\label{E=E1+E2}
\begin{aligned}
E(n,x)=&\sum_{i=2}^\infty\sum_{j=1}^i \sum_{v_1,...,v_j=0}^{m-1} L(i,n,j,v_1\cdots v_j,x)(-\log_m t_{v_1\cdots v_j;i})\\
=&nE_1(s,n,x)+nE_2(s,n,x),
\end{aligned}
\end{equation}
where
\[
E_1(s,n,x)=\sum_{i=s+1}^\infty\sum_{j=1}^i P_{i,j} \sum_{v_1,...,v_j=0}^{m-1} \frac{L(i,n,j,v_1\cdots v_j,x)}{L(i,n,j)}(-\log_m t_{v_1\cdots v_j;i}),
\]
and
\[
E_2(s,n,x)=\sum_{i=2}^{s}\sum_{j=1}^i P_{i,j} \sum_{v_1,...,v_j=0}^{m-1} \frac{L(i,n,j,v_1\cdots v_j,x)}{L(i,n,j)}(-\log_m t_{v_1\cdots v_j;i}).
\]

Now, we estimate $E_1(s,n,x)$ and $E_2(s,n,x)$ in three steps:\\
(i) For $s\geq 2$ and for all $x\in X^{p,q;a,b}_A$,
\begin{equation}\label{eq 3.4-4}
    \begin{aligned}
    \left|E_1(s,n,x)\right|\leq & \sum_{i=s}^\infty\sum_{j=1}^i P_{i,j} \sum_{v_1,...,v_j=0}^{m-1} \frac{L(i,n,j,v_1\cdots v_j,x)}{L(i,n,j)}\log_m \left|A^{j-1}\right|\\
    = & \sum_{i=s}^\infty\sum_{j=1}^i P_{i,j} \log_m \left|A^{j-1}\right|
    \leq  \sum_{i=s}^\infty\sum_{j=1}^i P_{i,j} (j+2)\\
     =&\sum_{j=1}^s \sum_{i=s}^{\infty} P_{i,j}(j+2)+\sum_{j=s}^\infty \sum_{i=j}^\infty P_{i,j}(j+2)
    \leq \frac{M}{(p_1)^s},
\end{aligned}
\end{equation}
where $M>0$ is an absolute constant.\\
(ii) By selecting further subsequences and then applying diagonal process (from $s=2$ to $\infty$), we can find a subsequence $\{\ell_k\}_{k=1}^\infty$ of $\mathbb{N}$ such that the limits
\begin{align*}
   R_{v_1\cdots v_j;s}(x)&=\lim_{k\to\infty} \frac{L(s,\ell_k,j,v_1\cdots v_j,x)}{L(s,\ell_k,j)}~(\forall 1\leq j\leq s).
\end{align*}
all exist. Evidently, we have
\begin{equation}\label{R}
    R_{v_1\cdots v_s;i}(x)=\sum_{j=0}^{m-1} R_{v_1\cdots v_s j;i}(x),~( \forall i\geq 2 \mbox{ and } 1\leq s \leq i-1). 
\end{equation}
(iii) By (\ref{R}), for $i=2$ and for all $x\in X^{p,q;a,b}_A$, $R_{v_1;2}(x)=\sum_{b_2=0}^{m-1}R_{v_1b_2;2}(x)$. Hence,
\begin{align*}
    &\sum_{j=1}^2 P_{2,j} \sum_{v_1,...,v_j=0}^{m-1} \frac{L(2,n,j,v_1\cdots v_j,x)}{L(2,n,j)}(-\log_m t_{v_1\cdots v_j;2})\\
    \mathop{\to}_{n\to\infty}&P_{2,1} \sum_{v_1=0}^{m-1}\left(-R_{v_1;2}(x)\log_m t_{v_1;2}\right)+P_{2,2}\sum_{v_1=0}^{m-1}\left(-\sum_{b_2=0}^{m-1} R_{v_1b_2;2}(x)\log_m t_{v_1b_2;2}\right)\\
    =&P_{2,1} \sum_{v_1=0}^{m-1}\left(-R_{v_1;2}(x)\log_m (a_{v_1})^{\frac{P_{2,2}}{P_{2,1}+P_{2,2}}}\right)\\
    &+P_{2,2}\sum_{v_1=0}^{m-1}\left(-\sum_{b_2=0}^{m-1} R_{v_1b_2;2}(x)\log_m (a_{v_1})^{\frac{-P_{2,1}}{P_{2,1}+P_{2,2}}}\right)\\
    =&0,
\end{align*}
where the notation $\mathop{\to}_{n\to\infty}$ indicates the convergence along the sequence $\{n=\ell_k\}_{k=1}^\infty$.

By (\ref{R}), for $i=3$ and for all $x\in X^{p,q;a,b}_A$, $R_{v_1;3}(x)=\sum_{b_2=0}^{m-1}R_{v_1b_2;3}(x)$ and $R_{v_1b_2;3}(x)=\sum_{b_3=0}^{m-1}R_{v_1b_2b_3;3}(x)$. Hence,
\begin{align*}
     &\sum_{j=1}^3 P_{3,j} \sum_{v_1,...,v_j=0}^{m-1} \frac{L(3,n,j,v_1\cdots v_j,x)}{L(3,n,j)}(-\log_m t_{v_1\cdots v_j;3})\\
    \mathop{\to}_{n\to\infty}&P_{3,1} \sum_{v_1=0}^{m-1}\left(-R_{v_1;3}(x)\log_m t_{v_1;3}\right)
     +P_{3,2}\sum_{v_1=0}^{m-1}\left(-\sum_{b_2=0}^{m-1} R_{v_1b_2;3}(x)\log_m t_{v_1b_2;3}\right)\\
      &+P_{3,3}\sum_{v_1=0}^{m-1}\left(-\sum_{b_2,b_3=0}^{m-1} R_{v_1b_2b_3;2}(x)\log_m t_{v_1b_2b_3;3}\right)\\
      =&P_{3,1} \sum_{v_1=0}^{m-1}\left[-R_{v_1;3}(x)\log_m F(v_1)^{\frac{P_{3,2}+P_{3,3}}{\sum_{k=1}^3 P_{3,k}}}\right]\\
     &+P_{3,2}\sum_{v_1=0}^{m-1}\left[-\sum_{b_2=0}^{m-1} R_{v_1b_2;3}(x)\log_m (a_{b_2})^{\frac{P_{3,3}}{P_{3,2}+P_{3,3}}}F(v_1)^{\frac{-P_{3,1}}{\sum_{k=1}^3 P_{3,k}}}\right]\\
      &+P_{3,3}\sum_{v_1=0}^{m-1}\left[-\sum_{b_2,b_3=0}^{m-1} R_{v_1b_2b_3;3}(x)\log_m (a_{b_2})^{\frac{-P_{3,2}}{P_{3,2}+P_{3,3}}}F(v_1)^{\frac{-P_{3,1}}{\sum_{k=1}^3 P_{3,k}}}\right]\\
      =&0,
\end{align*}
where $F(v_1)=\sum_{j=0}^{m-1}a_{v_1,j}(a_j)^{\frac{P_{3,3}}{P_{3,2}+P_{3,3}}}$.

Similarly, for $i\geq 2$ and for all $x\in X^{p,q;a,b}_A$, we have
\begin{align*}
     \sum_{j=1}^i P_{i,j} \sum_{v_1,...,v_j=0}^{m-1} \frac{L(i,n,j,v_1\cdots v_j,x)}{L(i,n,j)}(-\log_m t_{v_1\cdots v_j;i}) \mathop{\to}_{n\to\infty} 0.
\end{align*}
Thus, for $s\geq 2$ and for all $x\in X^{p,q;a,b}_A$, 
\begin{equation}\label{eq 3.4-5}
    E_2(s,n,x) \mathop{\to}_{n\to\infty} 0.
\end{equation}

By (\ref{eq 3.4-3}), (\ref{E=E1+E2}), (\ref{eq 3.4-4}) and (\ref{eq 3.4-5}),
\begin{align*}
    &\left|-\frac{1}{\ell_k}\log_m \mathbb{P}_{\infty}([x_1x_2\cdots x_{\ell_k}])-\left[1-\frac{1}{p}-\frac{1}{q}+\frac{1}{\left(p,q\right)p_1q_1}+\sum_{i=2}^\infty\left(\sum_{j=1}^i P_{i,j}\right) \log_m t_{\phi;i}\right]\right|\\
    =&\left|\frac{E(\ell_k,x)}{\ell_k}\right|=\left|\frac{\ell_k E_1(s,\ell_k,x)+\ell_k E_2(s,\ell_k,x)}{\ell_k}\right| \leq \frac{M}{(p_1)^s}\quad(\forall s\geq 2). 
\end{align*}
Thus, letting $s\to\infty$, we derive our claim (\ref{claim2}).

Finally, by Lemmas \ref{lemma B} and \ref{lemma 3.1}, we have
 \begin{equation*}
	\dim_HX^{p,q;a,b}_A= 1-\frac{1}{p}-\frac{1}{q}+\frac{1}{\left(p,q\right)p_1q_1}+\sum_{i=2}^\infty\left(\sum_{j=1}^i P_{i,j}\right) \log_m t_{\phi;i}.
	\end{equation*}

\item[\bf (3)] By applying the decomposition outlined in Theorem \ref{Thm: 1} and then applying Lemma \ref{lemma 3.1}, we complete the proof in a similar way as that of Theorem \ref{Thm: 2}. 

\item[\bf (4)] First, applying Lemma \ref{lemma H}, we have
\begin{align*}
    \sum_{i=0}^{m-1}\left(a_i\right)^{\frac{p}{q}}&= \sum_{i=0}^{m-1}\left(a_i\right)^{\frac{p}{q}} (1)^{1-\frac{p}{q}}\leq \left(\sum_{i=0}^{m-1} a_i\right)^{\frac{p}{q}}\left(\sum_{i=0}^{m-1}1\right)^{1-\frac{p}{q}}\\
    &=\left(\sum_{i=0}^{m-1} a_i\right)^{\frac{p}{q}}\left(m\right)^{1-\frac{p}{q}},
\end{align*}
where the equality holds if $a_0=a_1=\cdots =a_{m-1}$. This implies that
\begin{align*}
    \log_m \sum_{i=0}^{m-1}\left(a_i\right)^{\frac{p}{q}} \leq \frac{p}{q}\log_m \sum_{i=0}^{m-1} a_i   +\left(1-\frac{p}{q}\right),
\end{align*}
where the equality holds if $a_0=a_1=\cdots =a_{m-1}$. Thus, 
\begin{align*}
\dim_HX^{p,q;a,b}_A&= 1-\frac{1}{p}-\frac{1}{q}+\frac{1}{p}\log_m \sum_{i=0}^{m-1}\left(a_i\right)^{\frac{p}{q}}\\
&\leq 1-\frac{1}{p}-\frac{1}{q}+\left(\frac{1}{p}-\frac{1}{q}\right)+\frac{1}{q}\log_m \sum_{i=0}^{m-1} a_i=\dim_MX^{p,q;a,b}_A,
\end{align*}
where the equality holds if $a_0=a_1=\cdots =a_{m-1}$.

Second, since
\begin{align*}
    t_{\phi;2}&=\sum_{j_1,j_2=0}^{m-1}a_{j_1,j_2}(a_{j_1})^{\frac{-P_{2,1}}{P_{2,1}+P_{2,2}}}=\sum_{j_1=0}^{m-1}(a_{j_1})^{\frac{P_{2,2}}{P_{2,1}+P_{2,2}}}(1)^{\frac{P_{2,1}}{P_{2,1}+P_{2,2}}}\\
    &\leq \left(\sum_{j_1=0}^{m-1}a_{j_1}\right)^{\frac{P_{2,2}}{P_{2,1}+P_{2,2}}}\left(\sum_{j_1=0}^{m-1}1\right)^{\frac{P_{2,1}}{P_{2,1}+P_{2,2}}}= \left(\sum_{j_1=0}^{m-1}a_{j_1}\right)^{\frac{P_{2,2}}{P_{2,1}+P_{2,2}}}(m)^{\frac{P_{2,1}}{P_{2,1}+P_{2,2}}},
\end{align*}
we have
\begin{equation*}
    \log_m t_{\phi;2} \leq \frac{P_{2,2}}{P_{2,1}+P_{2,2}}\log_m |A|+\frac{P_{2,1}}{P_{2,1}+P_{2,2}},
\end{equation*}
where the equality holds if $a_0=\cdots =a_{m-1}$. 

Again, by Lemma \ref{lemma H}, we have
\begin{align*}
    \sum_{j=0}^{m-1}a_{r_1,j} (a_{j})^{\frac{P_{3,3}}{P_{3,2}+P_{3,3}}}&= \sum_{j=0}^{m-1}a_{r_1,j} (a_{j})^{\frac{P_{3,3}}{P_{3,2}+P_{3,3}}} (1)^{\frac{P_{3,2}}{P_{3,2}+P_{3,3}}}\\
    &\leq \left(\sum_{j=0}^{m-1}a_{r_1,j} (a_{j})\right)^{\frac{P_{3,3}}{P_{3,2}+P_{3,3}}} \left(\sum_{j=0}^{m-1}a_{r_1,j}\right)^{\frac{P_{3,2}}{P_{3,2}+P_{3,3}}}\\
    &\leq \left(\sum_{j=0}^{m-1}a_{r_1,j} (a_{j})\right)^{\frac{P_{3,3}}{P_{3,2}+P_{3,3}}} \left(a_{r_1}\right)^{\frac{P_{3,2}}{P_{3,2}+P_{3,3}}},
\end{align*}
where all equalities hold if all $a_j$ are equal for every $j$ with $a_{r_1,j}=1$. 

For $i=3$,
\begin{align*} 
    t_{\phi;3}= &\sum_{r_1,r_2}^{m-1}a_{r_1,r_2}(a_{r_2})^{\frac{P_{3,3}}{P_{3,2}+P_{3,3}}}\left(\sum_{j=0}^{m-1}a_{r_1,j}(a_j)^{\frac{P_{3,3}}{P_{3,2}+P_{3,3}}}\right)^{\frac{-P_{3,1}}{\sum_{k=1}^3 P_{3,k}}}\\
    \leq &\sum_{r_1=0}^\infty\left[\left(\sum_{j=0}^{m-1}a_{r_1,j} (a_{j})\right)^{\frac{P_{3,3}}{P_{3,2}+P_{3,3}}} \left(a_{r_1}\right)^{\frac{P_{3,2}}{P_{3,2}+P_{3,3}}}\right]^{1+\frac{-P_{3,1}}{\sum_{k=1}^3 P_{3,k}}}\\
    = &\sum_{r_1=0}^\infty\left[\left(\sum_{j=0}^{m-1}a_{r_1,j} (a_{j})\right)^{\frac{P_{3,3}}{\sum_{k=1}^3 P_{3,k}}} \left(a_{r_1}\right)^{\frac{P_{3,2}}{\sum_{k=1}^3 P_{3,k}}} (1)^{\frac{P_{3,1}}{\sum_{k=1}^3 P_{3,k}}}\right]\\
    \leq &\left(\sum_{r_1,j=0}^{m-1}a_{r_1,j} (a_{j})\right)^{\frac{P_{3,3}}{\sum_{k=1}^3 P_{3,k}}} \left(\sum_{r_1=0}^{m-1}a_{r_1}\right)^{\frac{P_{3,2}}{\sum_{k=1}^3 P_{3,k}}} \left(m\right)^{\frac{P_{3,1}}{\sum_{k=1}^3 P_{3,k}}}\\
    =&|A^2|^{\frac{P_{3,3}}{\sum_{k=1}^3 P_{3,k}}} |A|^{\frac{P_{3,2}}{\sum_{k=1}^3 P_{3,k}}}  (m)^{\frac{P_{3,1}}{\sum_{k=1}^3 P_{3,k}}},
\end{align*}
where all equalities hold if all $a_j$ are equal for every $0\leq j \leq m-1$.

Therefore, we have 
\begin{equation*}
\log_m t_{\phi;3}\leq \frac{P_{3,3}\log_m \left|A^2\right| }{\sum_{k=1}^3 P_{3,k}} +\frac{P_{3,2}\log_m \left|A\right|}{\sum_{k=1}^3 P_{3,k}}  +\frac{P_{3,1}}{\sum_{k=1}^3 P_{3,k}},
\end{equation*}
where the equality holds if $a_0=a_1=\cdots =a_{m-1}$.

Similarly, we deduce that for $i\geq 2$,
\begin{equation}\label{eq 3.4-1}
    \log_m t_{\phi;i}\leq \frac{\sum_{k=1}^{i} P_{i,k} \log_m \left|A^{k-1}\right|}{\sum_{j=1}^i P_{i,j}},
\end{equation}
where the equality holds if $a_0=a_1=\cdots =a_{m-1}$.

Thus, by (\ref{eq 3.4-1}), we have
\begin{align*}
&\dim_HX^{p,q;a,b}_A\\
=& 1-\frac{1}{p}-\frac{1}{q}+\frac{1}{\left(p,q\right)p_1q_1}+\sum_{i=2}^\infty\left(\sum_{j=1}^i P_{i,j}\right) \log_m t_{\phi;i}\\
\leq& 1-\frac{1}{p}-\frac{1}{q}+\frac{1}{\left(p,q\right)p_1q_1}+\sum_{i=2}^\infty\sum_{k=1}^{i} P_{i,k} \log_m \left|A^{k-1}\right|\\
=& 1-\frac{1}{p}-\frac{1}{q}+\frac{1}{\left(p,q\right)p_1q_1}+\sum_{i=2}^\infty P_{i,1}+\sum_{k=2}^{\infty}\sum_{i=k}^\infty P_{i,k} \log_m \left|A^{k-1}\right|\\
=&1-\frac{1}{p}-\frac{1}{q}+\frac{1}{(p,q)p_1q_1}+\left(1-\frac{1}{q_1}\right)\left(\frac{1}{p}-\frac{1}{q}\right)+\frac{(q_1-1)^2}{(p,q)}\sum_{k=2}^\infty \frac{\log_m |A^{k-1}|}{(q_1)^{k+1}}\\
=&\dim_MX^{p,q;a,b}_A,
\end{align*}
where the equality holds if $a_0=a_1=\cdots =a_{m-1}$.
\end{proof}	

\section{Higher-order interaction}\label{sec4}
In this section, we consider the interactions of higher-order multiplicative constraints. For $\ell\geq 1$, let $\mathcal{F}$ be a subset of $\Sigma_m^{\ell+2}$ and let $f_1(k),...,f_\ell(k):\mathbb{N}\to \mathbb{N}$ be functions. Define
\begin{equation*}
    X_{\mathcal{F}}^{p,q;a,b}(f_1,...,f_\ell)=\left\{(x_i)_{i=1}^\infty\in \Sigma_m^{\mathbb{N}}:x_{pk+a}x_{qk+b}x_{f_1(k)}\cdots x_{f_\ell(k)} \notin \mathcal{F},\forall k\geq 1\right\}
\end{equation*}
and when $\ell=0$, define
\begin{equation*}
    X_{\mathcal{F}}^{p,q;a,b}=\left\{(x_i)_{i=1}^\infty\in \Sigma_m^{\mathbb{N}}:x_{pk+a}x_{qk+b} \notin \mathcal{F},\forall k\geq 1\right\}.
\end{equation*}

\begin{theorem}
 For all $1\leq i\leq \ell$, if $f_i(k)=\Theta(k^{s_i})$ with $s_i>1$, then
	\[
	\dim_{M}X_{\mathcal{F}}^{p,q;a,b}(f_1,...,f_\ell)=\dim_{M}X_{\mathcal{F}}^{p,q;a,b},
	\]  
 and
 \[
	\dim_{H}X_{\mathcal{F}}^{p,q;a,b}(f_1,...,f_\ell)=\dim_{H}X_{\mathcal{F}}^{p,q;a,b}. 
	\] 
\end{theorem}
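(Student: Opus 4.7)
The plan rests on the observation that the superlinear growth $f_i(k)=\Theta(k^{s_i})$ with $s_i>1$ forces the higher-order constraints to act on only a sparse collection of positions. Define
\[
K(n) := \#\bigl\{k\geq 1 : pk+a\leq n,\ qk+b\leq n,\ f_i(k)\leq n\text{ for all }1\leq i\leq \ell\bigr\}.
\]
Since $f_i(k)=\Theta(k^{s_i})$ forces $k\leq C_i n^{1/s_i}$, we get $K(n)=O(n^{1/\max_i s_i})=o(n)$. Writing $Q(n):=\bigcup_{k\in K(n)}\{f_1(k),\dots,f_\ell(k)\}$, at most $\ell K(n)=o(n)$ positions of $\{1,\dots,n\}$ are touched by the extra constraints, which is not enough to alter either the pattern-count exponent or a local entropy estimate.

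For the Minkowski dimension I would squeeze the pattern counts. With the natural convention that on the right-hand side $\mathcal{F}$ denotes the two-coordinate projection $\widetilde{\mathcal{F}}=\{(v_1,v_2):(v_1,v_2,w)\in\mathcal{F}\ \forall w\in\Sigma_m^\ell\}$, the inclusion $X_{\mathcal{F}}^{p,q;a,b}(f_1,\dots,f_\ell)\subseteq X_{\widetilde{\mathcal{F}}}^{p,q;a,b}$ yields $N(n)\leq N_0(n)$ at the level of cylinders, hence the upper bound on $\dim_M$. For the opposite inequality I would use a freezing/gluing argument: start from a pattern counted in $N_0(n)$, freeze its values outside $Q(n)$, and reselect the at most $\ell K(n)$ values on $Q(n)$ so as to satisfy every active higher-order constraint (by definition of $\widetilde{\mathcal{F}}$, for each $k\in K(n)$ there is at least one admissible $\ell$-tuple at $\{f_i(k)\}$). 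This produces
\[
N(n)\ \geq\ m^{-\ell K(n)}\,N_0(n),
\]
and since $\ell K(n)/n\to 0$, taking $\log_m$ and dividing by $n$ forces the two Minkowski dimensions to coincide.

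The Hausdorff part follows the same philosophy at the measure-theoretic level. The upper bound is again immediate from the inclusion. For the lower bound I would take the Bernoulli-type measure $\mathbb{P}$ on $X_{\widetilde{\mathcal{F}}}^{p,q;a,b}$ built in Proposition~\ref{lemma 3.1} (whose local dimension attains $\dim_H$), and condition it on each of the $K(n)$ higher-order events $\{x_{pk+a}x_{qk+b}x_{f_1(k)}\cdots x_{f_\ell(k)}\notin\mathcal{F}\}$ to obtain a measure $\widetilde{\mathbb{P}}$ supported on $X_{\mathcal{F}}^{p,q;a,b}(f_1,\dots,f_\ell)$. Each conditioning rescales the mass of a cylinder by a uniformly bounded factor, so
\[
-\log_m\widetilde{\mathbb{P}}([x_1\cdots x_n])\ \geq\ -\log_m \mathbb{P}([x_1\cdots x_n])-O(K(n)),
\]
and dividing by $n$ preserves the $\liminf$. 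Billingsley's Lemma (Lemma~\ref{lemma B}) then transfers the matching Hausdorff lower bound to $X_{\mathcal{F}}^{p,q;a,b}(f_1,\dots,f_\ell)$.

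The main obstacle is a careful overlap analysis in the freezing/conditioning step: a coordinate in $Q(n)$ may simultaneously appear as $pk'+a$ or $qk'+b$ for some other $k'$, so a naive reassignment on $Q(n)$ need not preserve the existing two-position constraint structure. The superlinearity $s_i>1$ is precisely what keeps the set of conflicting indices $k'$ confined to a sublinear subset, allowing any discrepancy to be absorbed into the $o(n)$ error term. A clean implementation combines this overlap bookkeeping with the disjoint-lattice decomposition already developed in the proof of Theorem~\ref{Thm: 1}.
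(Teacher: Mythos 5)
Your proposal is correct and follows essentially the same route as the paper: both arguments rest on the count $\#\{k : f_i(k)\le n\} = O(n^{1/s_i})$, multiplied by the maximal chain length $O(\log_{q_1}n)$ to absorb the overlap with the $(pk+a,qk+b)$-lattices, yielding an $o(n)$ set of affected coordinates that perturbs neither the pattern-count exponent nor the local entropy. The paper leaves the freezing/gluing and measure-conditioning mechanics (and the precise meaning of $\mathcal{F}$ on the right-hand side) implicit, whereas you spell them out, but the underlying idea is identical.
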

\begin{proof}
	For $n\geq 1$, the decomposition of $\{1,...,n\}$ with respect to the constraints $(pk+a,qk+b)$ is a disjoint union of one dimensional lattices (see Theorem \ref{Thm: 1}). Observe that the set $\left\{k\in \{1,...,n\}: f_i(k) \leq n\right\}\subseteq\{1,...,n\}$ is influenced by the higher-order constraint. This implies that the constraint $f_i(k)$ induces at most
 \[
  2\#\left\{k\in\{1,...,n\} : f_i(k) \leq n\right\}
 \]
 many disjoint lattices of decomposition of $\{1,...,n\}$ with respect to the constraints $(pk+a,qk+b)$. 

For any disjoint lattice of decomposition of $\{1,...,n\}$ with respect to the constraints $(pk+a,qk+b)$, the cardinality of the lattice is at most $\log_{q_1}n$, where $q_1=\frac{q}{\gcd(p,q)}$, and 
 \begin{align*}
     &\frac{ 2\#\left\{k\in\{1,...,n\} : f_i(k) \leq n\right\} \times \log_{q_1}n}{n}\\
     \leq &\frac{ 2\#\left\{k\in\{1,...,n\} : k^{s_i} \leq n\right\} \times \log_{q_1}n}{n}\\
     \leq &\frac{ 2 n^{\frac{1}{s_i}} \times \log_{q_1}n}{n}\to 0\mbox{ as }n\to\infty \mbox{ (since }s_i>1).
 \end{align*}
Then, a similar estimate gives that
  \begin{align*}
     &\frac{ 2\#\cup_{i=1}^\ell\left\{k\in\{1,...,n\} : f_i(k) \leq n\right\} \times \log_{q_1}n}{n}\leq \frac{ 2 \sum_{i=1}^\ell n^{\frac{1}{s_i}} \times \log_{q_1}n}{n}\mathop{\to}_{n\to\infty} 0.
 \end{align*}
	Thus, the constraints $f_1(k)$, ...,$f_\ell(k)$ can be omitted. The proof is complete.
\end{proof}

\bibliographystyle{amsplain}
\bibliography{ban}

\providecommand{\bysame}{\leavevmode\hbox to3em{\hrulefill}\thinspace}
\providecommand{\MR}{\relax\ifhmode\unskip\space\fi MR }
\providecommand{\MRhref}[2]{%
  \href{http://www.ams.org/mathscinet-getitem?mr=#1}{#2}
}
\providecommand{\href}[2]{#2}
\begin{thebibliography}{10}

\bibitem{ban2021entropy}
J.-C. Ban, W.-G. Hu, and G.-Y. Lai, \emph{On the entropy of multidimensional
  multiplicative integer subshifts}, Journal of Statistical Physics
  \textbf{182} (2021), no.~2, 1--20.

\bibitem{ban2023hausdorff}
\bysame, \emph{Hausdorff dimension of multidimensional multiplicative
  subshifts}, Ergodic Theory and Dynamical Systems (2023), 1--30.

\bibitem{ban2019pattern}
J.-C. Ban, W.-G. Hu, and S.-S. Lin, \emph{Pattern generation problems arising
  in multiplicative integer systems}, Ergodic Theory and Dynamical Systems
  \textbf{39} (2019), no.~5, 1234--1260.

\bibitem{bourgain1990double}
J.~Bourgain, \emph{Double recurrence and almost sure convergence.}, Journal
  f{\"u}r die reine und angewandte Mathematik \textbf{404} (1990), 140--161.

\bibitem{brunet2021dimensions}
G.~Brunet, \emph{Dimensions of ‘self-affine sponges’ invariant under the
  action of multiplicative integers}, Ergodic Theory and Dynamical Systems
  (2021), 1--43.

\bibitem{fan2014some}
A.-H. Fan, \emph{Some aspects of multifractal analysis}, Geometry and Analysis
  of Fractals, Springer, 2014, pp.~115--145.

\bibitem{fan2021multifractal}
\bysame, \emph{Multifractal analysis of weighted ergodic averages}, Advances in
  Mathematics \textbf{377} (2021), 107488.

\bibitem{fan2012level}
A.-H. Fan, L.-M. Liao, and J.-H. Ma, \emph{Level sets of multiple ergodic
  averages}, Monatshefte f{\"u}r Mathematik \textbf{168} (2012), no.~1, 17--26.

\bibitem{fan2016multifractal}
A.-H. Fan, J.~Schmeling, and M.~Wu, \emph{Multifractal analysis of some
  multiple ergodic averages}, Advances in Mathematics \textbf{295} (2016),
  271--333.

\bibitem{frantzikinakis2011some}
N.~Frantzikinakis, \emph{Some open problems on multiple ergodic averages},
  Bulletin of the Hellenic Mathematical Society \textbf{60} (2016), 41--90.

\bibitem{frantzikinakis2023multiple}
N.~Frantzikinakis and B.~Host, \emph{Multiple recurrence and convergence
  without commutativity}, Journal of the London Mathematical Society
  \textbf{107} (2023), no.~5, 1635--1659.

\bibitem{furstenberg1982ergodic}
H.~Furstenberg, Y.~Katznelson, and D.~Ornstein, \emph{The ergodic theoretical
  proof of szemeredi’s theorem}, Bulletin of the American Mathematical
  Society \textbf{7} (1982), no.~3, 527--552.

\bibitem{furstenberg1978topological}
H.~Furstenberg and B.~Weiss, \emph{Topological dynamics and combinatorial
  number theory}, Journal d’Analyse Math{\'e}matique \textbf{34} (1978),
  no.~1, 61--85.

\bibitem{furstenberg2011mean}
\bysame, \emph{A mean ergodic theorem for $\frac{1}{N}\sum_{n=1}^{N}f({T}^n
  x)g({T}^{n^2}x)$}, Convergence in ergodic theory and probability, De Gruyter,
  2011, pp.~193--228.

\bibitem{inequality1952}
G.~H. Hardy, J.~E. Littlewood, and G.~P\'olya, Inequalities, Cambridge
  University Press, 1952.

\bibitem{host2005nonconventional}
B.~Host and B.~Kra, \emph{Nonconventional ergodic averages and nilmanifolds},
  Annals of Mathematics (2005), 397--488.

\bibitem{host2018nilpotent}
\bysame, \emph{Nilpotent structures in ergodic theory}, vol. 236, American
  Mathematical Soc., 2018.

\bibitem{kenyon2012hausdorff}
R.~Kenyon, Y.~Peres, and B.~Solomyak, \emph{Hausdorff dimension for fractals
  invariant under multiplicative integers}, Ergodic Theory and Dynamical
  Systems \textbf{32} (2012), no.~5, 1567--1584.

\bibitem{krause2022pointwise}
B.~Krause, M.~Mirek, and T.~Tao, \emph{Pointwise ergodic theorems for
  non-conventional bilinear polynomial averages}, Annals of Mathematics
  \textbf{195} (2022), no.~3, 997--1109.

\bibitem{peres2014dimensions}
Y.~Peres, J.~Schmeling, S.~Seuret, and B.~Solomyak, \emph{Dimensions of some
  fractals defined via the semigroup generated by 2 and 3}, Israel Journal of
  Mathematics \textbf{199} (2014), no.~2, 687--709.

\end{thebibliography}
\end{document}